\documentclass[review]{elsarticle}
\usepackage{}
\usepackage{amsfonts}
\usepackage{amsmath}
\usepackage{graphicx,setspace}
\usepackage{amsfonts,amsmath, amssymb, amsthm}
\usepackage{mathrsfs}
\usepackage{epstopdf}
\usepackage{float}
\usepackage{lineno,hyperref}
\usepackage{color}
\usepackage{subfigure}
\usepackage{bm}
\usepackage{graphicx}
\usepackage{amssymb, amsthm}
\usepackage{mathrsfs}
\usepackage{epstopdf}
\usepackage{float}
\usepackage{lineno,hyperref}
\usepackage{color}
\usepackage{subfigure}
\usepackage{cases}

\numberwithin{equation}{section}

\journal{Multiscale Modeling and Simulation}

\begin{document}

\newtheorem{definition}{Definition}
\newtheorem{lemma}{Lemma}
\newtheorem{remark}{Remark}
\newtheorem{theorem}{Theorem}
\newtheorem{proposition}{Proposition}
\newtheorem{assumption}{Assumption}
\newtheorem{example}{Example}
\newtheorem{corollary}{Corollary}
\def\e{\varepsilon}
\def\Rn{\mathbb{R}^{n}}
\def\Rm{\mathbb{R}^{m}}
\def\E{\mathbb{E}}
\def\hte{\bar\theta}
\def\cC{{\mathcal C}}
\numberwithin{equation}{section}

\begin{frontmatter}

\title{{\bf  Analysis of multiscale methods for stochastic dynamical systems driven by  $\alpha$-stable processes
}}
\author{\centerline{\bf Yanjie Zhang $^{a,}\footnote{zhangyj18@scut.edu.cn}$,
Xiao Wang$^{b,*}\footnote{corresponding author: xwang@vip.henu.edu.cn}$,
Zibo Wang$^{c} \footnote{ zibowang@hust.edu.cn }$
and Jinqiao Duan$^{d}\footnote{duan@iit.edu}$}
\centerline{${}^a$ School of Mathematics }
\centerline{South China University of Technology, Guangzhou 510641,  China}
\centerline{${}^b$ School of Mathematics and Statistics,} \centerline{Henan University, Kaifeng 475001, China}
\centerline{${}^c$ Center for Mathematical Sciences }
\centerline{Huazhong University of Science and Technology, Wuhan 430074,  China}
\centerline{${}^d$ Department of Applied Mathematics,} \centerline{Illinois Institute of Technology, Chicago, IL 60616, USA}}

\begin{abstract}
In this paper, we first analyze the strong and weak convergence of projective integration methods for multiscale stochastic dynamical systems  driven by $\alpha$-stable processes, which are used to estimate the effect that the fast components have on slow ones. Then we obtain the $p$th moment error bounds between the solution of slow component produced by projective integration method and the solution of effective system with $p \in \left(1,  \alpha\right)$. Finally, we corroborate our analytical results through a specific numerical example.
\end{abstract}

\begin{keyword}
 $\alpha$-stable process, averaging principle,  projective integration, error analysis.

\end{keyword}

\end{frontmatter}


\section{Introduction}
The multiscale models arise widely in various fields \cite{rb,ea,DP, EH}. For example, the production of mRNA and proteins occur in a bursty, unpredictable, and intermittent manner, which create variation or noise in individual cells or cell-to-cell interactions. Since the mRNA synthesis process is faster than the protein dynamics, this leads to a multiscale system. Finding a coarse-grained model that can effectively describe the dynamics of the multiscale model has always been a very active research field. Khasminskii et al.\cite{RZ} developed a stochastic averaging principle driven by Wiener noise that enables one to average out the fast-varying variables. The main idea is as follows: under appropriate conditions, with the slow-varying component fixed, if the fast-varying component has a stationary distribution, it can be shown that the process represented by the
slow-changing component converges weakly to a limit averaging system.   Motivated by the previous works,  averaging principle for various stochastic dynamical systems or stochastic partial differential equations driven by Wiener noise have also drawn much attention, see, e.g., \cite{YK, SC1,SC2,HF,WW,GA}.  Some authors also studied the averaging principle of two-scale dynamical systems driven by non-Gaussian noises with finite second moments \cite{DL12, JX, DG}. This excludes the $\alpha$-stable noise, since its second moment is divergent \cite{da}.

 Recently, multiscale dynamical systems driven by $\alpha$-stable processes have drawn much attention. Bao et al. \cite{bg} studied the averaging principle for stochastic partial differential equation with two-time-scale Markov switching. They showed that under suitable conditions, a limit process that was a solution of either an SPDE or an SPDE with switching was obtained.  In \cite{YZ} and \cite{YZ1}, they studied data assimilation and parameter estimation and showed that the averaged, low dimensional filter approximated the original filter, by examining the corresponding Zakai equations. Sun et al. \cite{XS, XS1} studied the averaging principle for stochastic real Ginzburg-Landau equation and stochastic differential equation. They used the classical Khasminskii approach to show the convergence between the slow component and averaged equation. Moreover, they  also studied the  strong and weak convergence rates for slow-fast stochastic differential equations and proved that the strong and weak convergent order are $1-1/\alpha$  and $1$ respectively.

However, it is often impractical to obtain the reduce equations in closed form, since the invariant measure is often unknown. Standard computational schemes may fails due to the separation between the $O(\varepsilon)$ time scale and the $O(1)$. This inspires us to develop a new algorithms to estimate the effect that the fast components have on slow ones. Several related techniques  have been proposed for multiscale stochastic dynamics driven by Wiener noises or non-Gaussian noises with finite second moments.  The  heterogeneous multi-scale method (HMM) is a general methodology for efficient numerical computation of problems with multiple scales and/or multi-levels of physics. For example, E. Vanden-Eijnden \cite{VE} used   the HMM to compute the evolution of the slow variables without having to derive explicitly the effective equations beforehand. W. E et al. \cite{EW} analyzed a class of numerical schemes for the multiscale dynamical systems driven by Wiener noises. A similar idea, also called ``projective integration '' method (PIM) was proposed in \cite{DG06}. D. Givon et al. used this method to analyze  multiscale stochastic dynamics driven by noises with finite second moments and obtained explicit bounds for the discrepancy between the results of the PIM and the slow components of the original system, which excludes the very important $\alpha$-stable noise.  A natural and important question is the following: for the multiscale dynamical systems driven by $\alpha$-stable noises, how to estimate the effect that the fast components have on the slow ones as the invariant measure is unknown from the perspective of computation ?

The main  technique used in this present manuscript is the framework of  ``projective integration'' method, which consist of a hybridization between a standard solver for the slow components, and short runs for the fast dynamics. The main difficulty is how to deal with the nonlinear term and $\alpha$-stable process.

This paper is organized as follows. In Section 2,  we recall the basic concepts about symmetric $ \alpha$-stable process and ergodic theory. In Section 3,  we formulate the problem and give the strong convergence analysis of the projective integration method. In Section 4, we give the weak convergence analysis of the projective integration method. In section 5, we corroborate our analytical results through a specific numerical example.
Some discussions are contained in Section 6.

To end this section, we introduction some notations, $C$ with or without subscripts will denote a positive constant, whose value may change from one place to another. We will use $\langle \cdot, \cdot \rangle $  to denote the scalar product in $ \mathbb{R}^n $ and  $||\cdot||$ to denote the norm.  $\mathcal{B}_{b}(\mathbb{R}^{d})$ denotes the space of all Borel measurable functions. For any $k \in \mathbb{N}_{+}$ and $\delta \in (0,1)$, we define
\begin{equation*}
\begin{aligned}
&C^{k}(\mathbb{R}^{n}):=\left\{u: \mathbb{R}^{n}\rightarrow \mathbb{R}:\text{ u and all its partial  derivative up to order k are continuous} \right\},\\
&C^{k}_{b}(\mathbb{R}^{n}):=\left\{u \in C^{k}(\mathbb{R}^{n}):\text{ for  $1 \leq i \leq k$, the $i$ order partial  derivative are bounded } \right\},\\
&C^{k+\delta}_{b}(\mathbb{R}^{n}):=\left\{u \in C^{k}_b(\mathbb{R}^{n}):\text{ all the $k$-th order partial derivative of u are $\delta$- H\"older continuous} \right\}.
\end{aligned}
\end{equation*}
For $k_1, k_2 \in \mathbb{N_{+}}$, $0 \leq \delta_1, \delta_2 <1$ and a real-valued function on $\mathbb{R}^{n}\times \mathbb{R}^{m}$, the notation $C^{k_1+\delta_1, k_2+\delta_2}_{b} $ denotes (i) for all $\beta$ and $\gamma$ satisfying $0 \leq |\beta| \leq k_1$, $0 \leq |\gamma| \leq k_2$ and $|\beta|+|\gamma|\geq 1$, the partial derivative $\partial^{\beta}_{x}\partial^{\gamma}_{y}u$  is bounded continuous; (ii) $\partial^{\beta}_{x}\partial^{\gamma}_{y}u$ is $\delta_1$-H\"older continuous with respect to $x$ with index $\delta_1$ uniformly in $y$ and $\delta_2$-H\"older continuous with respect to $y$ with index $\delta_2$ uniformly in $x$.
\section{Preliminaries}
In this section, we recall some basic definitions for L\'evy motions.
\subsection{\textbf{Symmetric $\alpha $ -stable  process }}
A L\'evy process $L_t$ taking values in $\mathbb{R}^n$ is characterized by a drift vector $b \in {\mathbb{R}^n}$, an $n \times n$ non-negative-definite, symmetric covariance matrix $ Q $ and a Borel measure $\nu$ defined on ${\mathbb{R}^n}\backslash \{ 0\} $.   We call $(b,Q,\nu)$ the generating triplet of  the L\'evy motions $L_t$. Moreover, we have the L\'evy-It\^o decomposition for $L_t$ as follows
\begin{equation}
{L_t}= bt + B_{Q}(t) + \int_{||y||< 1} y \widetilde N(t,dy) + \int_{||y||\ge 1} y N(t,dy),
\end{equation}
where $N(dt,dy)$ is the Poisson random measure, $\widetilde N(dt,dy) = N(dt,dy) - \nu (dx)dt$ is the compensated Poisson random measure, $\nu (A) = \mathbb{E}N(1,A)$ is the jump measure, and $ B_{Q}(t)$ is an independent standard $n$-dimensional Brownian motion. The characteristic function of $L_t$ is given by
\begin{equation}
\mathbb{E}[\exp({\rm i}\langle u, L_t \rangle)]=\exp(t\rho(u)), ~~~u \in {\mathbb{R}^n},
\end{equation}
where the function $\rho:{\mathbb{R}^n}\rightarrow \mathbb{C}$ is  the characteristic exponent
\begin{equation}
\rho(u)={\rm i}\langle u, b\rangle-\frac{1}{2}\langle u, Qu \rangle+\int_{{\mathbb{R}^n}\backslash \{ 0\}}{(e^{{\rm i}\langle u, z \rangle}-1-{\rm i}\langle u,z\rangle {I_{\{ || z || \textless 1\} }})\nu(dz)}.
\end{equation}
The Borel measure $\nu $ is called the jump measure.

The following definition about rotationally symmetric $\alpha$-stable process comes from \cite[Definition 7.23]{duan}.
\begin{definition}
For $\alpha \in (0,2)$, an $n$-dimensional symmetric $\alpha $-stable process $ L^{\alpha}_{t} $ is a L\'evy process with characteristic exponent $\rho$
\begin{equation}
\rho(u)=-| u |^{\alpha},  ~for~u \in {\mathbb{R}^{n}}
\end{equation}
\end{definition}

For a $n$-dimensional symmetric $\alpha$-stable L\'evy process, the diffusion matrix $ Q= 0$,
the drift vector $ b= 0$, and the L\'evy measure $\nu $ is given by
\begin{equation}
\nu(du)=\frac{c(n,\alpha)}{{| u |}^{n+\alpha}}du,
\end{equation}
where $ c(n, \alpha):=\frac{\alpha\Gamma (\frac{n+\alpha}{2})}{2^{1-\alpha}\pi^{\frac{n}{2}}\Gamma(1-\frac{\alpha}{2})}$.

Let $(P_t)_{t\geq 0}$ be a semigroup of bounded linear operators on Banach space $\mathcal{B}_{b}(\mathbb{R}^{d})$. Let $\mu$ be a probability measure on Borel space $(\mathbb{R}^{d}, \mathcal{B}(\mathbb{R}^{d}))$. We use the following standard notation:
\begin{equation}
\langle\mu,\varphi \rangle=\int_{\mathbb{R}^{d}} \varphi(x)\mu(dx).
\end{equation}
$\mu$ is said to be an invariant probability measure of $P_t$ if
\begin{equation}
\langle \mu, P_t\varphi\rangle=\langle \mu, \varphi\rangle, \forall t>0, ~~\forall \varphi \in \mathcal{B}_{b}(\mathbb{R}^{d}).
\end{equation}
One says that $P_t$ is ergodic if $P_t$ admits a unique invariant probability measure $\mu$, which amounts to say that
\begin{equation}
\lim_{t\rightarrow \infty}\frac{1}{t}\int ^{t}_{0}P_s f(x)ds =\langle \mu, f\rangle, ~~f \in \mathbb{B}_{b}(\mathbb{R}^{d}).
\end{equation}
 The following definition gives the more precise classification about the ergodic \cite[Definition 2.7 ]{zhx}.
\begin{definition}
Let $\mathbb{V}: \mathbb{R}^{d}\rightarrow [1, \infty)$ be a measurable function and $\mu$ an invariant probability measure of $P_t$. We say $P_t$ to be $\mathbb{V}$-uniformly exponential ergodic if there exist $c_0, \gamma >0$ such that
\begin{equation}
\sup_{||\varphi||_{\mathbb{V}}\leq 1}|P_t\varphi(x)-\langle \mu, \varphi\rangle|\leq c_0 \mathbb{V}(x)e^{-\gamma t},
\end{equation}
where $||\varphi||_{\mathbb{V}}=\sup_{x\in \mathbb{R}^{d}}|\varphi(x)|< + \infty$. If $\mathbb{V}\equiv 1$, then $P_t$ is said to be uniformly  exponential ergodic, which is equivalent to
\begin{equation}
||P_t(x,\cdot)-\mu||_{Var}\leq c_0e^{-\gamma t}, \forall x \in \mathbb{R}^{d}.
\end{equation}
where $P_t(x, \cdot)$ is the kernel of bounded linear operator $P_t$.
\end{definition}

\section{Strong convergence analysis of the projective integration method}
\subsection{Stochastic averaging principle}
Consider the following singularly perturbed systems of stochastic differential equations of the form
\begin{equation}
\label{sys01}
\left\{
\begin{aligned}
dX^{\varepsilon}_t&=f_1(X^{\varepsilon}_t,Y^{\varepsilon}_t)dt +\sigma_1dL^{1}_t, ~~X^{\varepsilon}_{0}=x_0 \in \mathbb{R}^n,\\
dY^{\varepsilon}_t&=\frac{1}{\varepsilon}f_2(X^{\varepsilon}_t,Y^{\varepsilon}_t)dt+\frac{\sigma_2}{\varepsilon^{\frac{1}{\alpha}}}dL^{2}_t,
~~Y^{\varepsilon}_{0}=y_0 \in \mathbb{R}^m,
\end{aligned}
\right.
\end{equation}
where $L^{1}_t, L^{2}_t$ are
independent $n$ and $m$ dimensional symmetric  $\alpha$-stable processes with triplets $(0, 0, \nu)$. The function $f_1: \mathbb{R}^{n} \times \mathbb{R}^{m}\rightarrow \mathbb{R}^{n}$ and $f_2: \mathbb{R}^{n} \times \mathbb{R}^{m}\rightarrow \mathbb{R}^{m}$ are Borel functions.  The positive constants $\sigma_1$ and $\sigma_2$ represent the noises intensities. The parameter $\varepsilon$ describes the ratio of the time scale between the slow component and fast component.

We make the following assumptions for the slow-fast stochastic dynamical system \eqref{sys01}.

{\bf Hypothesis  H.1 }
The functions $f_1 \in C^{1+\gamma, 2+\delta}_{b}$ and $f_2 \in C^{1+\gamma, 2+\gamma}_{b}$ with some $\gamma \in (\alpha-1,1)$ and $\delta \in (0,1)$.

\par
{\bf Hypothesis  H.2 }
The function $f_2$ satisfies
\begin{equation}
\sup_{x \in \mathbb{R}^{n}}|f_2(x,0)|< \infty.
\end{equation}

\par
{\bf Hypothesis  H.3 }
There exists a positive constants $\beta$ such that for any $x \in \mathbb{R}^{n}, y_1,y_2 \in \mathbb{R}^{m}$,
\begin{equation}
\left\langle f_2(x,y_1)- f_2(x,y_2), y_1-y_2\right\rangle \leq -\beta| y_1-y_2 |^{2},
\end{equation}

\begin{remark}
Note that with the help of Hypothesis $ \bf{H.1}$,  there exist  positive constants $L$ and $K$ such that
\begin{equation}
\begin{aligned}
| f_1(x_1,y_1)-f_1(x_2,y_2)|&\leq L  \left(| x_1-x_2|+| y_1-y_2|\right), \\
| f_2(x_1,y_1)-f_2(x_2,y_2)|&\leq L \left(| x_1-x_2|+| y_1-y_2|\right),
\end{aligned}
\end{equation}
and
\begin{equation*}
| f_i(x,y)|\leq K(1+| x| +| y|),
\end{equation*}
for all $x_i, x\in \mathbb{R}^{n}, ~y_i, y\in \mathbb{R}^{m}, \; i=1, 2.$%
\end{remark}

\begin{remark}
Under Hypothesis $ \bf{H.3}$, the fast component $Y^{\varepsilon}_t$ ensures the existence of an invariant measure  $\mu_{x}(dy)$.
\end{remark}

The following result concerning the strong convergence for system \eqref{sys} was proved in \cite[Theorem 2.1]{XS1}.

\begin{lemma}[Strong convergenc]
\label{sce}
Under Hypotheses $\bf{H.1}$-$\bf{H.3}$, for any initial value $(x,y)\in \mathbb{R}^{n}\times \mathbb{R}^{m}$, $t\in[0,T]$ and $p \in (1, \alpha)$, we have
\begin{equation}
\mathbb{E}\left(\sup_{t \in [0, T]}|X^{\varepsilon}_t-\bar{X}_t|^p\right) \leq C \varepsilon^{p(1-1/\alpha)},
\end{equation}
where the effective equation is of the form
\begin{equation}
\label{effective}
d\bar{X}_t=\bar{f}_1(X_t)dt+\sigma_1dL^{\alpha}_{t}
\end{equation}
with
\begin{equation}
\bar{f}_1=\int_{\mathbb{R}^{m}}f_1(x,y)\mu_{x}(dy).
\end{equation}
\end{lemma}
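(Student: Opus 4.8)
The plan is to follow Khasminskii's time-discretization scheme, adapted to the $\alpha$-stable setting in which only moments of order $p\in(1,\alpha)$ are finite. First I would establish uniform-in-$\varepsilon$ a priori bounds, namely $\sup_{\varepsilon}\mathbb{E}\sup_{t\in[0,T]}|X^{\varepsilon}_t|^{p}<\infty$ and $\sup_{\varepsilon}\sup_{t\in[0,T]}\mathbb{E}|Y^{\varepsilon}_t|^{p}<\infty$. For the fast component, applying the time change $t=\varepsilon\tau$ and the self-similarity of $L^{2}$ turns the fast equation into one whose law is independent of $\varepsilon$; combined with the dissipativity in \textbf{H.3} and the bound in \textbf{H.2}, an It\^o formula for $|Y^{\varepsilon}_t|^{p}$ (legitimate because $p<\alpha$ keeps the compensated jump integral finite) yields a mean-reversion that absorbs the $1/\varepsilon$ drift, giving the claimed uniform moment bound. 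The bound on $X^{\varepsilon}$ then follows from the linear growth recorded in the Remark, the bound on $Y^{\varepsilon}$, and the $L^{p}$ moment estimate for the $\alpha$-stable integral $\sigma_1 L^1_t$.

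Next I would partition $[0,T]$ into subintervals of length $\Delta=\Delta(\varepsilon)$ and introduce the auxiliary fast process $\hat{Y}^{\varepsilon}_t$ which, on each block $[k\Delta,(k+1)\Delta)$, solves the fast equation with the slow input frozen at $X^{\varepsilon}_{k\Delta}$ and driven by the same realization of $L^{2}$. Since the noise cancels in the difference $Y^{\varepsilon}_t-\hat{Y}^{\varepsilon}_t$, the dissipativity \textbf{H.3} together with the Lipschitz bound of $f_2$ gives a control of $\sup_{t}\mathbb{E}|Y^{\varepsilon}_t-\hat{Y}^{\varepsilon}_t|^{p}$ in terms of $\mathbb{E}|X^{\varepsilon}_t-X^{\varepsilon}_{k\Delta}|^{p}$, which is itself governed by $\Delta$ and $\Delta^{1/\alpha}$ through the block increments. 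Because the effective equation is driven by the same $\alpha$-stable motion $L^{\alpha}=L^{1}$, the noise terms cancel in $X^{\varepsilon}_t-\bar X_t$ as well, and I would decompose
\begin{equation*}
\begin{aligned}
X^{\varepsilon}_t-\bar{X}_t
&=\int_0^t\big[f_1(X^{\varepsilon}_s,Y^{\varepsilon}_s)-f_1(X^{\varepsilon}_s,\hat{Y}^{\varepsilon}_s)\big]\,ds\\
&\quad+\int_0^t\big[f_1(X^{\varepsilon}_s,\hat{Y}^{\varepsilon}_s)-\bar{f}_1(X^{\varepsilon}_s)\big]\,ds
+\int_0^t\big[\bar{f}_1(X^{\varepsilon}_s)-\bar{f}_1(\bar{X}_s)\big]\,ds.
\end{aligned}
\end{equation*}
The first term is handled by the Lipschitz continuity of $f_1$ and the previous estimate; the third term, using that $\bar{f}_1$ inherits Lipschitz continuity from the regularity in \textbf{H.1} and the ergodicity, is left for a final Gronwall argument.

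The heart of the proof is the middle, averaging term. Freezing $X^{\varepsilon}$ at the grid points reduces it to block integrals $\int_{k\Delta}^{(k+1)\Delta}\!\big[f_1(X^{\varepsilon}_{k\Delta},\hat{Y}^{\varepsilon}_s)-\bar{f}_1(X^{\varepsilon}_{k\Delta})\big]\,ds$. Under the time change $s\mapsto s/\varepsilon$ each block corresponds to running the frozen fast dynamics for the long fast time $\Delta/\varepsilon$, so by the $\mathbb{V}$-uniform exponential ergodicity of Definition~2 (with $\mathbb{V}$ chosen to dominate the linear growth of $f_1(x,\cdot)$, and invariant measure $\mu_{x}$) the time average relaxes to the spatial average $\bar{f}_1$, with an error that I would quantify through the decay rate $\gamma$ and the block length, producing contributions in $\Delta$ and in $\varepsilon/\Delta$. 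Assembling all pieces, taking the supremum in $t$, invoking the $L^{p}$ moment inequalities for the $\alpha$-stable stochastic integrals (valid for $p<\alpha$) and Gronwall's inequality, and finally optimizing the free parameter $\Delta$ against $\varepsilon$, I expect the exponents to combine into the stated rate $\varepsilon^{p(1-1/\alpha)}$.

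The main obstacle I anticipate is precisely this averaging step under the absence of a finite second moment. Because we are forced to remain in $L^{p}$ with $p<\alpha$, the It\^o isometry and the standard $L^{2}$ martingale estimates are unavailable, so every bound touching the $\alpha$-stable integrals must instead rely on moment inequalities below the stability index, and the It\^o formula must be applied to $|\cdot|^{p}$ with careful control of the compensated jump integral. Establishing the ergodic relaxation rate \emph{uniformly} in the frozen slow variable, and balancing it against the block-freezing error so that the various powers of $\Delta$ and $\varepsilon/\Delta$ assemble into the sharp exponent $1-1/\alpha$, is where the bulk of the technical effort will lie.
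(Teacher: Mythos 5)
First, a point of comparison: the paper itself contains no proof of Lemma~\ref{sce} --- it is imported verbatim from \cite[Theorem 2.1]{XS1}, and the argument there is not a Khasminskii discretization but a corrector (Poisson equation) argument. One solves, for each frozen $x$, the nonlocal Poisson equation $\mathcal{L}_2(x)\Phi(x,\cdot)=-\bigl(f_1(x,\cdot)-\bar f_1(x)\bigr)$, where $\mathcal{L}_2(x)$ is the generator of the frozen fast process (the drift $f_2(x,\cdot)$ plus the $\alpha$-stable nonlocal operator), and applies It\^o's formula to $\varepsilon\,\Phi(X^{\varepsilon}_t,Y^{\varepsilon}_t)$. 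The averaging integral $\int_0^t\bigl[f_1(X^{\varepsilon}_s,Y^{\varepsilon}_s)-\bar f_1(X^{\varepsilon}_s)\bigr]ds$ is thereby exchanged for boundary and drift terms of order $\varepsilon$ plus a compensated-jump integral whose coefficient scales as $\varepsilon\cdot\varepsilon^{-1/\alpha}=\varepsilon^{1-1/\alpha}$; this scaling is precisely where the stated exponent comes from. It also explains the otherwise curious requirement $\gamma\in(\alpha-1,1)$ in \textbf{H.1}: it is the Schauder-type regularity guaranteeing that $\Phi(x,\cdot)$ is H\"older-smooth of order exceeding $\alpha$, so that the nonlocal It\^o formula is applicable. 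Your proposal is therefore a genuinely different route from the one the paper relies on --- which would be fine, if it delivered the stated rate.

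It does not, and this is the genuine gap, located exactly at the step you defer (``I expect the exponents to combine into the stated rate''). Your plan controls each frozen block through ergodic relaxation and then sums the $T/\Delta$ blocks by the triangle inequality. Even granting the most favorable per-block bound one could hope for --- after the time change a block integral has $L^{p}$ size $O\bigl((\varepsilon\Delta)^{1/2}\bigr)$, by an integrated-covariance argument --- summation leaves an averaging error of order $(\varepsilon/\Delta)^{1/2}$, to be balanced against the freezing error $O(\Delta^{1/\alpha})$ produced by the $\alpha$-stable increments of the slow variable. The optimal choice $\Delta=\varepsilon^{\alpha/(2+\alpha)}$ then yields $\varepsilon^{p/(2+\alpha)}$ for the $p$-th moment, not $\varepsilon^{p(1-1/\alpha)}$ (for $\alpha$ near $2$ this is order $1/4$ per path instead of the claimed $1/2$). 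Extracting the sharp rate from a discretization would require cancellation \emph{across} blocks, i.e.\ treating the block integrals as near-martingale differences, which for $p<2$ needs a von Bahr--Esseen-type inequality together with a conditioning argument to decouple the blocks; none of this appears in your plan. Moreover, even the per-block bound is not actually available here: $\mathbb{V}$-uniform exponential ergodicity (Definition 2, Lemma~\ref{avv}) controls only the bias $\bigl|\mathbb{E}F(z^{n}_t)-\langle\mu_x,F\rangle\bigr|$, whereas you need the $p$-th moment of the \emph{fluctuation} of a time average, whose standard proof is a covariance computation --- unavailable because $f_1(x,\cdot)$ has linear growth under \textbf{H.1} while $Y^{\varepsilon}$ has no finite second moment. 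You correctly identified these issues as ``the main obstacle,'' but the proposal supplies no mechanism to overcome them; the corrector method of \cite{XS1} is the known resolution, and that is the proof the lemma actually rests on.
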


However, it is often impractical to obtain the reduce equations in closed form, since the invariant measure is often unknown. Standard computational schemes may fails due to the separation between the $O(\varepsilon)$ time scale and the $O(1)$. This inspires us to develop a new algorithms to estimate the effect that the fast components have on slow ones.

\subsection{Numerical method}

For $n=1,2, \cdots, \lfloor T/\triangle \rfloor$, we assume that the slow component of \eqref{sys01} has the numerical solution $X_n$. The projective integration method consists of  a macro-solver:  an Euler-Maruyama time-stepper,
\begin{equation}
\label{org}
X_{n+1}=X_n+A(X_n)\triangle t+\sigma_1 \triangle L^{\alpha}_{n},
\end{equation}
where
\begin{equation}
\triangle L^{\alpha}_{n}=L^{\alpha}_{t_{n+1}}-L^{\alpha}_{t_{n}}.
\end{equation}
Given the coarse variable at the $n$-th  time step $X_n$, we assume that $Y^{n}_{m}, ~m=0,1, \cdots, M$ is the discrete variables associated with the fast dymanics  at the $n$-th coarse step, which are numerically generated by the Euler-Maruyama scheme with the time step $\delta t$ ($0 < \delta t \ll 1$), i.e.,
\begin{equation}
\label{micro}
Y^{n}_{m+1}=Y^{n}_{m}+\frac{1}{\varepsilon}f_2(X_n, Y^{n}_{m} ){\delta t}+\frac{\sigma_2}{\varepsilon^{\frac{1}{\alpha}}}\triangle L^{\alpha}_{m}, ~~Y^{n}_{0}=y_0,
\end{equation}
where
\begin{equation}
\triangle L^{\alpha}_{m}=L^{\alpha}_{t_{m+1}}-L^{\alpha}_{t_{m}},
\end{equation}
\begin{remark}
The function $A(X_n)$ is the approximation of $\bar{f}_1(X_{n})$.  We refer to \eqref{org} as the macro-solver. The sequence $Y^{n}_{m}$ is called the micro-solver. Equations \eqref{org} and \eqref{micro} define the projective integration method.
\end{remark}

Let $\triangle t$ be a fixed time step, and $\bar{X}_{n}$ be the numerical approximation to the coarse variable $\bar{X}$, at time $t_n=n\triangle t$. Inspired by the effective equation \eqref{effective}, $\bar{X}_{n}$ is evolved in time by an Euler-Maruyama step,
\begin{equation}
\label{macro}
\bar{X}_{n+1}=\bar{X}_n+\bar{f}_1(\bar{X}_n)\triangle t +\sigma_1 \triangle L^{\alpha}_{n},
\end{equation}
where $\triangle L^{\alpha}_{n}$ is $\alpha$-stable displacements over a time interval $\triangle t$.

Indeed, for every $c>0$, $L^{\alpha}_{ct}$ and $c^{\frac{1}{\alpha}}L^{\alpha}_t$ have the same distribution, then we easily gain the following lemma.
\begin{lemma}[Scaling transform]
\label{dis}
Let $Y^{\varepsilon}_t$ be the solution of the equation
\begin{equation}
dY^{\varepsilon}_t=\frac{1}{\varepsilon}f_2(x,Y^{\varepsilon}_t)dt+\frac{\sigma_2}{\varepsilon^{\frac{1}{\alpha}}}dL^{\alpha}_t,
\end{equation}
then ${Y_t}=Y^{\varepsilon}_{\varepsilon t}$ is a solution of the stochastic differential equation
\begin{equation}
d{Y_t}=f_2(x,{Y_t})dt+\sigma_2d\widehat{L}^{\alpha}_t,
\end{equation}
where $\widehat{L}^{\alpha}_t=\frac{1}{\varepsilon^{\frac{1}{\alpha}}}L^{\alpha}_{\varepsilon t}$.
\end{lemma}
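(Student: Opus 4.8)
The plan is to verify the claim directly at the level of integral equations, relying on the $1/\alpha$-self-similarity of $L^{\alpha}_t$ recorded immediately above the statement. First I would write the defining SDE for $Y^{\varepsilon}_t$ in integral form,
\begin{equation*}
Y^{\varepsilon}_t=Y^{\varepsilon}_0+\frac{1}{\varepsilon}\int_0^t f_2(x,Y^{\varepsilon}_s)\,ds+\frac{\sigma_2}{\varepsilon^{1/\alpha}}L^{\alpha}_t,
\end{equation*}
which is legitimate because the noise enters additively with the constant coefficient $\sigma_2/\varepsilon^{1/\alpha}$, so there is no It\^o correction and no stochastic integral to reinterpret under the time change.

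Next I would evaluate this identity at the rescaled time $\varepsilon t$ and perform the deterministic change of variable $s=\varepsilon r$ in the drift integral. Since $ds=\varepsilon\,dr$ and the limits transform from $[0,\varepsilon t]$ to $[0,t]$, the prefactor $1/\varepsilon$ cancels exactly, giving
\begin{equation*}
Y^{\varepsilon}_{\varepsilon t}=Y^{\varepsilon}_0+\int_0^t f_2(x,Y^{\varepsilon}_{\varepsilon r})\,dr+\sigma_2\Big(\frac{1}{\varepsilon^{1/\alpha}}L^{\alpha}_{\varepsilon t}\Big).
\end{equation*}
Writing $Y_t:=Y^{\varepsilon}_{\varepsilon t}$ and $\widehat{L}^{\alpha}_t:=\varepsilon^{-1/\alpha}L^{\alpha}_{\varepsilon t}$, the right-hand side is precisely $Y^{\varepsilon}_0+\int_0^t f_2(x,Y_r)\,dr+\sigma_2\widehat{L}^{\alpha}_t$, which is the integral form of the target equation $dY_t=f_2(x,Y_t)\,dt+\sigma_2\,d\widehat{L}^{\alpha}_t$ with the correct initial datum $Y_0=Y^{\varepsilon}_0$.

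The one point that needs genuine justification, and the step I expect to be the crux, is that $\widehat{L}^{\alpha}_t$ is again a bona fide symmetric $\alpha$-stable L\'evy process, not merely a process whose one-dimensional marginals agree with those of $L^{\alpha}$. Here I would upgrade the self-similarity recalled before the lemma, namely $L^{\alpha}_{ct}\stackrel{d}{=}c^{1/\alpha}L^{\alpha}_t$ for each fixed $c>0$, to the statement that the rescaled path $t\mapsto c^{-1/\alpha}L^{\alpha}_{ct}$ is itself a symmetric $\alpha$-stable L\'evy motion. Taking $c=\varepsilon$, the deterministic time-and-space rescaling preserves the stationary and independent increments, and it preserves the characteristic exponent $\rho(u)=-|u|^{\alpha}$: the increment $\widehat{L}^{\alpha}_t-\widehat{L}^{\alpha}_s=\varepsilon^{-1/\alpha}(L^{\alpha}_{\varepsilon t}-L^{\alpha}_{\varepsilon s})$ has characteristic function $\exp\!\big(\varepsilon(t-s)\rho(\varepsilon^{-1/\alpha}u)\big)=\exp\!\big((t-s)\rho(u)\big)$, using $\rho(\varepsilon^{-1/\alpha}u)=\varepsilon^{-1}\rho(u)$. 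This identifies $\widehat{L}^{\alpha}$ as a driving $\alpha$-stable noise with the same law as $L^{\alpha}$ and completes the verification; the Lipschitz property of $f_2$ from Remark~1 guarantees that the solution so obtained is the unique one.
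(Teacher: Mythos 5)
Your proposal is correct and follows exactly the route the paper intends: the paper gives no detailed proof, only the one-line remark that $L^{\alpha}_{ct}$ and $c^{1/\alpha}L^{\alpha}_t$ have the same distribution, and your argument is precisely the fleshed-out version of that remark (integral form, change of variable $s=\varepsilon r$ in the drift, identification of $\widehat{L}^{\alpha}_t=\varepsilon^{-1/\alpha}L^{\alpha}_{\varepsilon t}$). Your additional verification that $\widehat{L}^{\alpha}$ is a genuine symmetric $\alpha$-stable L\'evy process, via the increment characteristic function and $\rho(\varepsilon^{-1/\alpha}u)=\varepsilon^{-1}\rho(u)$, is the right way to close the gap the paper leaves implicit.
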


By Lemma \ref{dis}, we know that the micro-solver \eqref{micro} is a particular realization that uses an Euler-Maruyama time-stepper as well, i.e.,
\begin{equation}
\label{ymicro}
{Y}^{n}_{ m+1}={Y}^{n}_{ m}+f_2(X_n, {Y}^{n}_{ m} )\delta t+\sigma_2\triangle {\widehat{L}}^{\alpha,n}_{m}.
\end{equation}
Thus $A(X_n)$ can be estimated by an empirical averaging
\begin{equation}
\label{An}
A(X_n)=\frac{1}{M}\sum_{m=1}^{M}f_1(X_n, Y^{n}_{m}).
\end{equation}

In the following, we will give a discrete version of Gronwall inequality.

Let $u_n$ and $\omega_n$ be nonnegative sequences, and $c$ a nonnegative constant. If
\begin{equation}
u_{n}\leq \sum_{l=0}^{ n-1}\omega_lu_l+c,
\end{equation}
then we have
\begin{equation}
u_n\leq c e^{\sum_{l=0}^{n-1}\omega_l}.
\end{equation}

Before proceeding the strong convergence of projective integration method for slow-fast stochastic dynamical systems  under $\alpha$-stable noises, we need to provide some estimates for the processes $Y^{n}_{m}$ and $X_n$.

\begin{lemma}[Estimate for microsolver]
\label{y}
For small enough $\delta t$ and $1<p< \alpha$, we have
\begin{equation}
\sup_{\mbox{\tiny$\begin{array}{c}
0\leq n \leq \lfloor \frac{T}{\triangle t}\rfloor\\
0\leq m \leq M\\
\end{array}$}} \mathbb{E}|Y^{n}_{m}|^{p} \leq C \left(\delta t\right)^{\frac{p}{\alpha}}, ~~Y^{n}_{0}=Y^{n-1}_{m}.
\end{equation}
\end{lemma}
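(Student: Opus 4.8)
The plan is to carry out the whole estimate at the level of $p$th moments with $1<p<\alpha$, never squaring, since the $\alpha$-stable increments have divergent variance and the usual $L^2$ energy identity is unavailable; this is the essential difficulty flagged in the introduction. I would start from the micro-solver in the scaled form \eqref{ymicro},
\begin{equation*}
Y^{n}_{m+1}=Y^{n}_{m}+f_2(X_n,Y^{n}_{m})\,\delta t+\sigma_2\,\triangle\widehat{L}^{\alpha,n}_{m},
\end{equation*}
and record two structural inputs. First, by the self-similarity of the $\alpha$-stable process that underlies Lemma \ref{dis}, the increment $\triangle\widehat{L}^{\alpha,n}_{m}$ is a symmetric $\alpha$-stable increment over a step of length $\delta t$, so that $\mathbb{E}\,|\triangle\widehat{L}^{\alpha,n}_{m}|^{p}=C_p\,(\delta t)^{p/\alpha}$ is finite precisely because $p<\alpha$; this single fact is the origin of the exponent $p/\alpha$ in the claimed bound. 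Second, the global Lipschitz and linear growth bounds for $f_2$ together with Hypothesis \textbf{H.2} control the drift, while the one-sided dissipativity of Hypothesis \textbf{H.3} supplies the contraction that prevents the accumulated noise from destroying the estimate.

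Next I would convert the recursion into a scalar inequality for $a_m:=\mathbb{E}\,|Y^{n}_{m}|^{p}$. Splitting $Y^{n}_{m+1}$ into the deterministic one-step map $\Phi(y):=y+f_2(X_n,y)\,\delta t$ and the fresh increment $\sigma_2\triangle\widehat{L}^{\alpha,n}_m$, which is independent of $Y^{n}_m$ and has symmetric law, I would apply the elementary inequality $|u+v|^{p}\le(1+\eta)\,|u|^{p}+C_{\eta}\,|v|^{p}$ and take expectations, so that the noise enters only through $\mathbb{E}\,|\triangle\widehat{L}^{\alpha,n}_m|^{p}=C_p(\delta t)^{p/\alpha}$. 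For the deterministic part I would use Hypothesis \textbf{H.3} in the form $\langle f_2(x,y),y\rangle\le-\beta|y|^{2}+|f_2(x,0)|\,|y|$ together with the Lipschitz bound, which gives $|\Phi(y)|^{2}\le(1-c\,\delta t)|y|^{2}+C\,\delta t$ for all sufficiently small $\delta t$; raising this to the power $p/2$ and using subadditivity (since $p/2<1$) shows that $\Phi$ acts as a strict contraction up to the bounded forcing permitted by \textbf{H.2}. This yields a one-step recursion of the form
\begin{equation*}
a_{m+1}\le(1-c\,\delta t)\,a_m+C\,(\delta t)^{p/\alpha}.
\end{equation*}

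Finally I would close the argument with the discrete Gronwall inequality stated just before the lemma, organizing the chained initialization $Y^{n}_{0}=Y^{n-1}_{m}$ as an induction on the macro-index $n$ starting from $Y^{0}_{0}=y_0$: the contraction factor $(1-c\,\delta t)$ damps the initial term uniformly in $n$, and the forcing terms are summed against the geometric weights, giving a bound uniform in both $n$ and the micro-steps $m\le M$. The step that genuinely requires care — and the main obstacle — is the moment analysis of the stable increments: because $\mathbb{E}\,|\triangle\widehat{L}^{\alpha,n}_m|^{2}=\infty$, every estimate must be kept at order $p<\alpha$, so the nonlinear drift and all cross terms have to be absorbed through the dissipativity contraction and the independence and symmetry of the increments rather than through any second-moment identity; balancing the contraction rate $c\,\delta t$ against the accumulation of the $(\delta t)^{p/\alpha}$ noise contributions over the micro-run is the delicate bookkeeping that controls the final order of the estimate.
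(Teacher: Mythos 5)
Your route is genuinely different from the paper's, so it is worth saying what the paper actually does: its proof of Lemma \ref{y} never invokes the dissipativity Hypothesis \textbf{H.3}. It applies the crude inequality $|a+b+c|^{p}\leq C_{p}\left(|a|^{p}+|b|^{p}+|c|^{p}\right)$ to the recursion \eqref{ymicro}, bounds the drift through the Lipschitz/linear-growth consequences of \textbf{H.1}--\textbf{H.2}, uses the scaling identity $\mathbb{E}|\triangle \widehat{L}^{\alpha,n}_{m}|^{p}=C(\delta t)^{p/\alpha}$ (which you also use, correctly), and then appeals to the discrete Gronwall inequality. Your idea of extracting a contraction $(1-c\,\delta t)$ from \textbf{H.3} is a reasonable and in principle sharper strategy, but as written it has a genuine gap at precisely the step you defer to ``delicate bookkeeping.''

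The gap is quantitative and it is fatal to the claimed conclusion. First, in the inequality $|u+v|^{p}\leq(1+\eta)|u|^{p}+C_{\eta}|v|^{p}$ the constant behaves like $C_{\eta}\sim \eta^{-(p-1)}$; to keep the net one-step factor $(1+\eta)(1-c\,\delta t)^{p/2}$ below one you are forced to take $\eta$ of order $\delta t$, and then the noise term is $C_{\eta}\,\mathbb{E}|\triangle\widehat{L}^{\alpha,n}_{m}|^{p}\sim(\delta t)^{-(p-1)}(\delta t)^{p/\alpha}=(\delta t)^{p/\alpha-p+1}$, not $C(\delta t)^{p/\alpha}$; likewise the drift part contributes $(C\delta t)^{p/2}$ after subadditivity, again larger than $(\delta t)^{p/\alpha}$. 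Second, and more importantly, even if one grants your recursion $a_{m+1}\leq(1-c\,\delta t)a_{m}+C(\delta t)^{p/\alpha}$, iterating it only gives $\sup_{m}a_{m}\leq a_{0}+C(\delta t)^{p/\alpha}/(c\,\delta t)=a_{0}+C(\delta t)^{p/\alpha-1}$: the geometric damping buys exactly a factor $1/(c\,\delta t)$, and since $p/\alpha<1$ this bound \emph{diverges} as $\delta t\to 0$ rather than reproducing $C(\delta t)^{p/\alpha}$. This is not an artifact of sloppiness but of the method: for $p<\alpha$ one has $\mathbb{E}\bigl|\sum_{k<M}\triangle L_{k}\bigr|^{p}=C(M\delta t)^{p/\alpha}\ll M(\delta t)^{p/\alpha}$, so any argument that accumulates per-step $p$-th moments of the stable increments is intrinsically too lossy; a dissipativity argument can only be closed with a Lyapunov/generator estimate of the form $\mathbb{E}\left[V(u+\sigma_{2}\triangle\widehat{L}^{\alpha,n}_{m})\right]\leq V(u)+C\delta t$ for $V(y)=(1+|y|^{2})^{p/2}$, which exploits cancellation in the stable integral, or by keeping the signed increments together and using the exact stability of their weighted sum. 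Third, your induction on $n$ cannot damp the initial term at small $m$: for $m=0$ one has $a_{0}=|y_{0}|^{p}$, which is not $O((\delta t)^{p/\alpha})$, so the bound you claim to prove is not even consistent with your own recursion's starting value. (The paper's statement and proof have closely related defects, but its Gronwall computation is at least the route by which the stated bound is formally reached; your alternative does not reach it.)
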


\begin{proof}
By \eqref{ymicro}, Hypothesis $\bf{H.1}$ and Hypothesis $\bf{H.2}$,  we have
\begin{equation}
\begin{aligned}
\mathbb{E}\left|Y^{n}_{m+1}\right|^{p}&\leq C_{p}\mathbb{E}\left|Y^{n}_{m}\right|^{p}+C_{p}\mathbb{E}\left|f_2(X_n, Y^{n}_{m} )\delta t\right|^{p}+C_{p}\sigma_2^{p}\mathbb{E}|\triangle \widehat{L}^{\alpha,n}_{m}|^{p}\\
& = C_{p}\mathbb{E}\left|Y^{n}_{m}\right|^{p}+C_{p}\mathbb{E}\left|f_2(X_n, Y^{n}_{m} )\delta t\right|^{p}+C_{p,\sigma_2}\mathbb{E}| \widehat{L}^{\alpha,n}_{\delta t}|^{p}\\
& \leq  C_{p}\mathbb{E}\left|Y^{n}_{m}\right|^{p}+C_{p, L} \mathbb{E}\left|Y^{n}_{m}\right|^{p}(\delta t)^{p}+C_{p,L}\mathbb{E}|f_2(X_n,0)|^{p}(\delta t)^{p}+C_{p,\sigma_2}\mathbb{E}| \widehat{L}^{\alpha,n}_{\delta t}|^{p}\\
& \leq C_{p, L}\left(1+(\delta t)^{p}\right)\mathbb{E}\left|Y^{n}_{m}\right|^{p}+C_{p,L}(\delta t)^{p}+C_{ p,\sigma_2}\left({\delta t}\right)^{\frac{p}{\alpha}}.
\end{aligned}
\end{equation}
By the discrete Gronwall inequality, we have
\begin{equation}
\mathbb{E}\left|Y^{n}_{m}\right|^{p} \leq C \left(\delta t\right)^{\frac{p}{\alpha}}.
\end{equation}
\end{proof}

\begin{lemma}[Estimate for macrosolver]
\label{xn}
For the small enough $\Delta t <1$ and $1<p< \alpha$, we have
\begin{equation}
\sup_{0\leq n \leq \lfloor T/\Delta t\rfloor }\mathbb{E}|X_n|^{p}\leq C|\Delta t|^{\frac{p}{\alpha}}.
\end{equation}
\end{lemma}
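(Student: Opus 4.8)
The plan is to mirror the one-step moment recursion used for Lemma \ref{y}, but now for the macro-solver \eqref{org}. Fixing $p\in(1,\alpha)$ (so that $\alpha>1$ and every $p$-th moment below is finite) and taking the initial datum $X_0=x_0$, I would first apply the elementary bound $|a+b+c|^{p}\leq C_p(|a|^{p}+|b|^{p}+|c|^{p})$ to \eqref{org}, obtaining
\begin{equation*}
\mathbb{E}|X_{n+1}|^{p}\leq C_p\,\mathbb{E}|X_n|^{p}+C_p(\triangle t)^{p}\,\mathbb{E}|A(X_n)|^{p}+C_p\sigma_1^{p}\,\mathbb{E}|\triangle L^{\alpha}_{n}|^{p}.
\end{equation*}
The objective is to upgrade this into a recursion of the form $\mathbb{E}|X_{n+1}|^{p}\leq(1+C\triangle t)\mathbb{E}|X_n|^{p}+C(\triangle t)^{p/\alpha}$, to which the discrete Gronwall inequality stated above applies.

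I would next control the two remaining terms. For the empirical drift \eqref{An}, Jensen's inequality applied to the average $\frac1M\sum_{m=1}^{M}$ together with the linear growth estimate $|f_1(x,y)|\leq K(1+|x|+|y|)$ gives
\begin{equation*}
\mathbb{E}|A(X_n)|^{p}\leq \frac{1}{M}\sum_{m=1}^{M}\mathbb{E}|f_1(X_n,Y^{n}_{m})|^{p}\leq C\bigl(1+\mathbb{E}|X_n|^{p}\bigr)+\frac{C}{M}\sum_{m=1}^{M}\mathbb{E}|Y^{n}_{m}|^{p},
\end{equation*}
and Lemma \ref{y} bounds the last sum by $C(\delta t)^{p/\alpha}$. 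For the noise increment, the self-similarity $L^{\alpha}_{ct}\stackrel{d}{=}c^{1/\alpha}L^{\alpha}_{t}$ already used in Lemma \ref{dis} yields $\mathbb{E}|\triangle L^{\alpha}_{n}|^{p}=C_{p,\alpha}(\triangle t)^{p/\alpha}$, which is finite precisely because $p<\alpha$. Since $\triangle t<1$ and $p/\alpha<1<p$, the factors $(\triangle t)^{p}$ multiplying the drift contributions are dominated by $(\triangle t)^{p/\alpha}$ and may be absorbed into the noise scale.

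The genuinely delicate point is the coefficient of $\mathbb{E}|X_n|^{p}$: the crude splitting leaves a constant $C_p>1$, and iterating $C_p^{\,n}$ over $n\sim T/\triangle t$ steps would blow up, so one must reduce this coefficient to $1+C\triangle t$ before invoking Gronwall. I would do this by writing $X_{n+1}=(X_n+A(X_n)\triangle t)+\sigma_1\triangle L^{\alpha}_{n}$, observing that for $p>1$ the symmetric increment $\triangle L^{\alpha}_{n}$ is a conditionally centered (martingale) difference, and combining a von Bahr--Esseen type inequality $\mathbb{E}|U+\xi|^{p}\leq\mathbb{E}|U|^{p}+C_p\mathbb{E}|\xi|^{p}$ valid for $\mathbb{E}[\xi\mid\mathcal{F}]=0$ with the convexity estimate $(|X_n|+|A(X_n)|\triangle t)^{p}\leq(1+C\triangle t)|X_n|^{p}+C\triangle t\,|A(X_n)|^{p}$. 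Substituting the bounds of the previous paragraph then produces the target recursion, and the discrete Gronwall inequality closes the argument. I expect the main obstacle to be exactly this linearization of $x\mapsto|x|^{p}$ for $1<p<2$ — the power is not $C^{2}$, so the near-unit coefficient $1+C\triangle t$ must come from the martingale structure of the stable increment rather than from a Taylor expansion — together with the bookkeeping needed to ensure that, after summing the single-step contributions over the $\lfloor T/\triangle t\rfloor$ macro-steps, the accumulated noise still controls the stated scale; the cleanest way to guarantee uniformity in $n$ is to sum the increments globally as $\sigma_1 L^{\alpha}_{t_n}$ and use $\mathbb{E}|L^{\alpha}_{t_n}|^{p}=C\,t_n^{p/\alpha}$ directly.
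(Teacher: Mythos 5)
Your proposal diverges from the paper's printed proof at exactly the step you flag as delicate, and your instinct there is better than the paper's. The paper's argument is precisely the crude one you reject: it keeps the constant $C_{p,M}>1$ (from $|a+b+c|^{p}\leq 3^{p-1}(|a|^{p}+|b|^{p}+|c|^{p})$) in front of $\mathbb{E}|X_n|^{p}$, collapses the recursion to $\mathbb{E}|X_{n+1}|^{p}\leq C_{p,M}\,\mathbb{E}|X_n|^{p}+C|\triangle t|^{p/\alpha}$, and then simply invokes the discrete Gronwall inequality. As you observe, this cannot close: iterating a multiplicative constant bounded away from $1$ over $n\sim T/\triangle t$ steps produces a factor of order $C_{p,M}^{\,T/\triangle t}$, which blows up as $\triangle t\to 0$, and the Gronwall lemma quoted in the paper needs coefficients $\omega_l$ whose sum stays bounded; the paper's proof also silently discards the initial datum $|x_0|^{p}$. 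Your replacement device — the von Bahr--Esseen inequality for the conditionally centered stable increment, combined with the convexity estimate $(|X_n|+|A(X_n)|\triangle t)^{p}\leq(1+C\triangle t)|X_n|^{p}+C\triangle t\,|A(X_n)|^{p}$ to bring the coefficient down to $1+C\triangle t$ — is the standard correct way to run such a moment recursion for $1<p<\alpha<2$, and it is a genuine improvement over what is printed.

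The gap is that neither your argument nor any correct argument can reach the bound actually claimed in the lemma, and your write-up stops just short of saying so. Two obstructions: at $n=0$ the left-hand side is $|x_0|^{p}$, a fixed constant, which is not $\leq C(\triangle t)^{p/\alpha}$ once $\triangle t$ is small; and even if $x_0=0$, your own target recursion $\mathbb{E}|X_{n+1}|^{p}\leq(1+C\triangle t)\mathbb{E}|X_n|^{p}+C(\triangle t)^{p/\alpha}$ accumulates, over $\lfloor T/\triangle t\rfloor$ steps, an inhomogeneous contribution of order $T(\triangle t)^{p/\alpha-1}$, which diverges since $p<\alpha$. Your suggested repair — summing the noise globally and using $\mathbb{E}|L^{\alpha}_{t_n}|^{p}=C\,t_n^{p/\alpha}$ — removes the divergence but yields a contribution of order $T^{p/\alpha}$, i.e.\ $O(1)$, not $O((\triangle t)^{p/\alpha})$. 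In other words, what your method actually proves is the uniform bound $\sup_{0\leq n\leq\lfloor T/\triangle t\rfloor}\mathbb{E}|X_n|^{p}\leq C\left(1+|x_0|^{p}\right)$, which is the true statement, and is all that the downstream arguments (e.g.\ the use of $\mathbb{E}|X_n|^{p}$ inside Lemma \ref{lemma7}) genuinely require; the factor $|\triangle t|^{p/\alpha}$ in the lemma as printed is an artifact of the invalid Gronwall step in the paper's own proof. You should state and prove the $O(1)$ bound rather than chase the printed one.
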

\begin{proof}
By \eqref{org}, Hypothesis  H.1, Lemma \ref{y} and $\delta t \ll \Delta t$, we have
\begin{equation}
\begin{aligned}
\mathbb{E}|X_{n+1}|^{p}&\leq C_{p}\mathbb{E}|X_{n}|^{p}+C_{p}\mathbb{E}|A(X_n)|^{p}|\Delta t|^{p}+C_{p, \sigma_1}|\Delta t|^{\frac{p}{\alpha}}\\
& \leq C_{p}\mathbb{E}|X_{n}|^{p}+C_{p, M}(\Delta t)^p \sum_{m=1}^{M}\mathbb{E}|f_1(X_n, Y^{n}_{m})|^p+C_{p, \sigma_1}|\Delta t|^{\frac{p}{\alpha}}\\
& \leq C_{p}\mathbb{E}|X_{n}|^{p}+C_{p, M}(\Delta t)^p \sum_{m=1}^{M}\mathbb{E}|f_1(X_n, Y^{n}_{m})-f_1(0,Y^{n}_{m})|^p\\
&+C_{p, M}(\Delta t)^p \sum_{m=1}^{M}\mathbb{E}|f_1(0,Y^{n}_{m})|^p+C_{p, \sigma_1}|\Delta t|^{\frac{p}{\alpha}}\\
& \leq C_{p}\mathbb{E}|X_{n}|^{p}+C_{p, M}\mathbb{E}|X_{n}|^{p}(\Delta t)^p+C_{p, M, K}(\Delta t)^p \left(1+\mathbb{E}|Y^{n}_{m}|^p\right)+C_{p, \sigma_1}|\Delta t|^{\frac{p}{\alpha}}\\
& \leq C_{p}\mathbb{E}|X_{n}|^{p}+C_{p, M}\mathbb{E}|X_{n}|^{p}(\Delta t)^p+C_{p, M, K}(\Delta t)^p \left(1+C \left(\delta t\right)^{\frac{p}{\alpha}}\right)+C_{p, \sigma_1}|\Delta t|^{\frac{p}{\alpha}}\\
& \leq C_{p, M}\left(1+(\Delta t)^p\right)\mathbb{E}|X_{n}|^{p}+C_{p,\sigma_1, M, K}|\Delta t|^{\frac{p}{\alpha}}\\
& \leq C_{p, M}\mathbb{E}|X_{n}|^{p}+C_{p,\sigma_1, M, K}|\Delta t|^{\frac{p}{\alpha}}\\
\end{aligned}
\end{equation}
By the discrete Gronwall inequality, we have
\begin{equation}
\sup_{0\leq n \leq \lfloor T/\Delta t\rfloor }\mathbb{E}|X_{n}|^{p}\leq C|\Delta t|^{\frac{p}{\alpha}}.
\end{equation}
\end{proof}

\begin{lemma}[Estimate for the successive iterations of the microsolver]
For the small enough $\delta t$ and $1 <p< \alpha$, the deviation between two successive iterations of the microsolver satisfies
\begin{equation}
\sup_{\mbox{\tiny$\begin{array}{c}
0\leq n \leq \lfloor \frac{T}{\triangle t}\rfloor\\
0\leq m \leq M\\
\end{array}$}} \mathbb{E}|Y^{n}_{m+1}-Y^{n}_{m}|^{p} \leq C (\delta t)^{\frac{p}{\alpha}}.
\end{equation}
\end{lemma}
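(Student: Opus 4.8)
The plan is to read off the one-step increment directly from the microsolver recursion \eqref{ymicro} and then estimate its $p$th moment term by term. Subtracting $Y^{n}_{m}$ from both sides of \eqref{ymicro} gives the identity $Y^{n}_{m+1}-Y^{n}_{m}=f_2(X_n,Y^{n}_{m})\delta t+\sigma_2\triangle\widehat{L}^{\alpha,n}_{m}$, so the deviation is nothing but a drift contribution of size $\delta t$ plus one fresh $\alpha$-stable increment. Applying the elementary $C_p$-inequality $|a+b|^p\le C_p(|a|^p+|b|^p)$ then reduces the problem to bounding $\mathbb{E}|f_2(X_n,Y^{n}_{m})|^p(\delta t)^p$ and $\sigma_2^p\,\mathbb{E}|\triangle\widehat{L}^{\alpha,n}_{m}|^p$ separately.

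For the drift term I would split $f_2(X_n,Y^{n}_{m})=\bigl(f_2(X_n,Y^{n}_{m})-f_2(X_n,0)\bigr)+f_2(X_n,0)$. The first piece is controlled by the Lipschitz bound of the Remark, $|f_2(X_n,Y^{n}_{m})-f_2(X_n,0)|\le L|Y^{n}_{m}|$, whose $p$th moment is $O((\delta t)^{p/\alpha})$ by Lemma \ref{y}; the second piece is bounded by the constant $\sup_{x}|f_2(x,0)|<\infty$ furnished by Hypothesis $\mathbf{H.2}$. Hence $\mathbb{E}|f_2(X_n,Y^{n}_{m})|^p\le C$ uniformly, and the whole drift contribution is $\le C(\delta t)^p$.

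The key step is the noise term. Here I would invoke the self-similarity of the symmetric $\alpha$-stable process: over a step of length $\delta t$ the increment $\triangle\widehat{L}^{\alpha,n}_{m}$ has the same law as $(\delta t)^{1/\alpha}\widehat{L}^{\alpha}_{1}$, exactly as was used in the proof of Lemma \ref{y}. Since $1<p<\alpha$, the $p$th moment $\mathbb{E}|\widehat{L}^{\alpha}_{1}|^p$ is finite (this is precisely why the range $p\in(1,\alpha)$ is imposed), and therefore $\mathbb{E}|\triangle\widehat{L}^{\alpha,n}_{m}|^p=C(\delta t)^{p/\alpha}$. This is the main obstacle in the sense that everything hinges on the finiteness and correct scaling of this fractional moment of the stable increment; the divergent second moment of $\alpha$-stable noise is exactly what rules out the classical $L^2$ arguments, so the restriction $p<\alpha$ cannot be relaxed.

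Combining the two estimates gives $\mathbb{E}|Y^{n}_{m+1}-Y^{n}_{m}|^p\le C(\delta t)^p+C(\delta t)^{p/\alpha}$. Because the nondegeneracy range $p\in(1,\alpha)$ forces $\alpha>1$, we have $p/\alpha<p$, so for $\delta t<1$ the term $(\delta t)^p$ is dominated by $(\delta t)^{p/\alpha}$ and the bound collapses to $C(\delta t)^{p/\alpha}$. All constants produced above come from Lemmas \ref{y} and \ref{xn} together with Hypotheses $\mathbf{H.1}$–$\mathbf{H.2}$, none of which depend on the indices $n$ or $m$, so the estimate is uniform and one may take the supremum over $0\le n\le\lfloor T/\triangle t\rfloor$ and $0\le m\le M$ to obtain the claim.
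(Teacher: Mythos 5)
Your proof is correct and follows essentially the same route as the paper: read the one-step increment off the recursion \eqref{ymicro}, apply the $C_p$-inequality, bound the drift moment uniformly, and use self-similarity of the stable increment to get $\mathbb{E}|\triangle\widehat{L}^{\alpha,n}_{m}|^{p}=C(\delta t)^{p/\alpha}$, which dominates $(\delta t)^{p}$ for $\delta t<1$. The only (minor) difference is in the drift term: the paper invokes the linear growth bound $|f_2(x,y)|\leq K(1+|x|+|y|)$ together with the macrosolver estimate of Lemma \ref{xn} to control $\mathbb{E}|X_n|^{p}$, whereas you split off $f_2(X_n,0)$ and use Hypothesis $\mathbf{H.2}$ with the Lipschitz bound in $y$, which makes your argument independent of Lemma \ref{xn}; both are valid and yield the same uniform constant.
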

\begin{proof}
By Lemma \ref{y}, Lemma \ref{xn}  and Hypothesis $\bf{H.1}$, we have
\begin{equation}
\begin{aligned}
 \mathbb{E}|Y^{n}_{m+1}-Y^{n}_{m}|^{p}&\leq C_{p}\mathbb{E}\left| f_2(X_n, Y^{n}_{m})\right|^{p}(\delta t)^{p}+C_p \sigma_2^{p} \mathbb{E}|L^{\alpha, n}_{\delta t}|^{p}\\
&\leq  C_{p, K} \left(1+\mathbb{E}|X_n|^{p}+\mathbb{E}|Y^{n}_{m}|^{p} \right)(\delta t)^{p}+C_{p, \sigma_2}(\delta t)^{p/\alpha}\\
& \leq C \left(\delta t\right)^{\frac{p}{\alpha}}.
\end{aligned}
\end{equation}
Therefore we have
\begin{equation}
\sup_{\mbox{\tiny$\begin{array}{c}
0\leq n \leq \lfloor \frac{T}{\triangle t}\rfloor\\
0\leq m \leq M\\
\end{array}$}} \mathbb{E}|Y^{n}_{m+1}-Y^{n}_{m}|^{p} \leq C (\delta t)^{\frac{p}{\alpha}}.
\end{equation}
\end{proof}

Define the following auxiliary  process $z^{k}_t$, which satisfies the following stochastic differential equation,
\begin{equation}
\label{znn}
dz^{n}_t=f_2(X_{n}, z^{n}_t)dt +\sigma_2 d{\widehat{L}}^{\alpha,n}_t, ~~z^{n}_0=y_0,
\end{equation}
where ${\widehat{L}}^{\alpha,n}_t$ is in independent of ${L}^{\alpha}_t$.

\begin{lemma}[Estimate for the auxiliary process]
\label{z}
Under Hypotheses $\bf{H.1}$-$\bf{H.3}$, the process $z^{n}_t$ satisfies
\begin{equation}
\sup_{0\leq t \leq T}\mathbb{E}|z^{n}_t|^p\leq C \left(1+|y_0|^{p}\right).
\end{equation}
\end{lemma}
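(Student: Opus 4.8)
The plan is to apply the Dynkin (It\^o) formula to the smooth Lyapunov function $V(z)=(1+|z|^2)^{p/2}$ rather than to $|z|^p$ directly, since $p\in(1,\alpha)\subset(1,2)$ makes $|z|^p$ only $C^1$ with a singular second derivative at the origin, whereas $V\in C^2(\mathbb{R}^m)$ with $|\nabla V(z)|\le C(1+|z|)^{p-1}$ and $|\nabla^2 V(z)|\le C(1+|z|)^{p-2}$. Applying the generator of \eqref{znn} to $V$ gives
\[
\mathcal{L}V(z)=\langle \nabla V(z),f_2(X_n,z)\rangle+\int_{\mathbb{R}^m}\Big[V(z+\sigma_2 y)-V(z)-\langle\nabla V(z),\sigma_2 y\rangle\mathbf{1}_{\{|y|<1\}}\Big]\nu(dy),
\]
with $\nu(dy)=c(m,\alpha)|y|^{-m-\alpha}dy$. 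After a standard localization by stopping times $\tau_R=\inf\{t:|z^n_t|\ge R\}$ to turn the compensated integrals into genuine martingales, taking expectations removes the martingale part and leaves $\frac{d}{dt}\mathbb{E}V(z^n_{t\wedge\tau_R})=\mathbb{E}\,\mathcal{L}V(z^n_{t\wedge\tau_R})$; I would then pass $R\to\infty$ by Fatou at the very end.

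For the drift term I would split $f_2(X_n,z)=\big(f_2(X_n,z)-f_2(X_n,0)\big)+f_2(X_n,0)$ and use the dissipativity of Hypothesis \textbf{H.3} with $y_1=z,\ y_2=0$, namely $\langle f_2(X_n,z)-f_2(X_n,0),z\rangle\le-\beta|z|^2$, together with $|f_2(X_n,0)|\le C$ uniformly in $x$ from Hypothesis \textbf{H.2}. Young's inequality then yields $\langle z,f_2(X_n,z)\rangle\le-\tfrac{\beta}{2}|z|^2+C$, and multiplying by $p(1+|z|^2)^{p/2-1}$ gives $\langle\nabla V(z),f_2(X_n,z)\rangle\le -c_1 V(z)+c_2$ for constants $c_1,c_2>0$ that, crucially, do not depend on $X_n$ (the dissipativity and the boundedness at $y=0$ are uniform in $x$), so no moment bound on $X_n$ is needed at this step.

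The hard part will be the nonlocal jump term, which I would control by splitting $\nu$ at $|y|=1$. For small jumps $|y|<1$ a second-order Taylor estimate gives $|V(z+\sigma_2 y)-V(z)-\langle\nabla V(z),\sigma_2 y\rangle|\le C(1+|z|)^{p-2}|y|^2$, and $\int_{|y|<1}|y|^{2-m-\alpha}dy<\infty$ precisely because $\alpha<2$; since $p-2<0$ this contributes at most a bounded constant. For large jumps $|y|\ge1$ the mean-value bound $|V(z+\sigma_2 y)-V(z)|\le C|y|(1+|z|^{p-1}+|y|^{p-1})$ combined with $\int_{|y|\ge1}|y|^{1-m-\alpha}dy<\infty$ (which needs $\alpha>1$) and $\int_{|y|\ge1}|y|^{p-\alpha-m}dy<\infty$ (which needs $p<\alpha$) produces a term $\le C(1+|z|^{p-1})$; since $p-1<p$, Young's inequality absorbs it as $\tfrac{c_1}{2}V(z)+C$. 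The two integrability conditions are exactly the standing constraints $1<p<\alpha<2$, which is why this is the delicate part of the argument. Collecting the drift and jump bounds gives $\mathcal{L}V(z)\le-\tfrac{c_1}{2}V(z)+C$, hence $\frac{d}{dt}\mathbb{E}V(z^n_{t\wedge\tau_R})\le-\tfrac{c_1}{2}\mathbb{E}V(z^n_{t\wedge\tau_R})+C$. Gr\"onwall's inequality then yields $\mathbb{E}V(z^n_{t\wedge\tau_R})\le V(y_0)+2C/c_1$, and letting $R\to\infty$ and using $|z|^p\le V(z)\le C(1+|z|^p)$ gives $\sup_{0\le t\le T}\mathbb{E}|z^n_t|^p\le C(1+|y_0|^p)$, as claimed.
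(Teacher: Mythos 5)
Your proof is correct, up to the standard localization technicalities you yourself flag, but it follows a genuinely different and heavier route than the paper's. The paper never invokes the generator and never uses the dissipativity Hypothesis \textbf{H.3}: it writes the equation in integral form, bounds $|f_2(X_n,z^n_s)|\le L|z^n_s|+|f_2(X_n,0)|$ via the Lipschitz bound from \textbf{H.1} (see Remark 1) together with \textbf{H.2}, so that pathwise
\begin{equation*}
|z^{n}_t|\leq |y_0|+L \int^{t}_{0}|z^{n}_s|\,ds +\int^{t}_{0}|f_2(X_n,0)|\,ds+\sigma_2 \bigl|\widehat{L}^{\alpha,n}_{t}\bigr|,
\end{equation*}
then raises this to the $p$-th power, uses $\mathbb{E}\bigl|\widehat{L}^{\alpha,n}_{t}\bigr|^{p}\leq C t^{p/\alpha}$ (finite precisely because $p<\alpha$), and closes with the integral form of Gronwall's inequality. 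That argument is elementary and a few lines long, but its constant inherits exponential dependence on $T$ from Gronwall, and the bound is only asserted on $[0,T]$. Your Lyapunov--Dynkin argument with $V(z)=(1+|z|^2)^{p/2}$ exploits \textbf{H.3} and therefore produces a bound uniform in $t\geq 0$; this stronger, time-uniform control is exactly the kind of estimate that underlies the exponential ergodicity of Lemma \ref{avv} (quoted from \cite{XS1}), so your route is closer to what one would need to prove that lemma from scratch. The price is the nonlocal generator computation: your splitting of the L\'evy integral at $|y|=1$, with the integrability checks $\int_{|y|<1}|y|^{2-m-\alpha}\,dy<\infty$ (needing $\alpha<2$) and $\int_{|y|\geq 1}|y|^{p-m-\alpha}\,dy<\infty$ (needing $p<\alpha$), is carried out correctly, and it hides the same moment constraint $p<\alpha$ that the paper uses explicitly through the finiteness of the $p$-th moment of the stable process.
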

\begin{proof}
By \eqref{znn} and Hypotheses $\bf{H.1}$, we have
\begin{equation}
|z^{n}_t|\leq |y_0|+L \int^{t}_{0}|z^{n}_s|ds +\int^{t}_{0}|f_2(X_n,0)|ds+\sigma_2 \left|\widehat{L}^{\alpha,n}_{t}\right|.
\end{equation}
This implies that
\begin{equation}
\begin{aligned}
\mathbb{E}|z^{n}_t|^{p}&\leq C_p |y_0|^{p}+C_{p, L, T }\int^{t}_{0}\mathbb{E}|z^{n}_s|^{p}ds +C_{p,T}\mathbb{E}|f_2(X_n,0)|^{p}+C_{p,\sigma_2}t^{\frac{p}{\alpha}}\\
& \leq  C_p |y_0|^{p}+C_{p, L, T }\int^{t}_{0}\mathbb{E}|z^{n}_s|^{p}ds +C_{p,T}\mathbb{E}|f_2(X_n,0)|^{p}+C_{p,\sigma_2, T}\\
\end{aligned}
\end{equation}
By Gronwall inequality, we have
\begin{equation}
\sup_{0\leq t \leq T} \mathbb{E}|z^{n}_t|^{p} \leq C \left(1+|y_0|^{p}\right).
\end{equation}
\end{proof}

The following results illustrate that the dynamic \eqref{znn} is exponential ergodicity with invariant measure $\mu^{X_n}$, which comes from \cite[Proposition 3.5]{XS1}

\begin{lemma}[Exponential ergodicity property]
\label{avv}
Under Hypotheses $\bf{H.1}$-$\bf{H.3}$, there exists a positive constant $C$ such that for each fixed $X_{n}$ and $F\in C^{1}_{b}(\mathbb{R}^{n})$, we have
\begin{equation}
\begin{aligned}
\left|\mathbb{E}\left[F(z^{n}_{t})\right]-\int_{\mathbb{R}^{n}}F(y)\mu^{X_{n}}(dy)\right|\leq C\left(1+|y_0|\right)e^{-\beta t}.
\end{aligned}
\end{equation}
\end{lemma}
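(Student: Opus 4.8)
The plan is to establish this exponential ergodicity through a synchronous coupling argument that converts the one-sided dissipativity of Hypothesis \textbf{H.3} into a pathwise contraction. Alongside the process $z^n_t$ solving \eqref{znn} with $z^n_0 = y_0$, I would introduce a second copy $\tilde z^n_t$ solving the \emph{same} equation driven by the \emph{same} noise $\widehat L^{\alpha,n}_t$, but started from a random initial datum $\tilde z^n_0$ distributed according to the invariant measure $\mu^{X_n}$ (whose existence for each fixed $X_n$ is recorded in Remark 2). By stationarity the law of $\tilde z^n_t$ is $\mu^{X_n}$ for every $t$, so $\mathbb{E}[F(\tilde z^n_t)] = \int F\,d\mu^{X_n}$, and the quantity to be bounded equals $|\mathbb{E}[F(z^n_t) - F(\tilde z^n_t)]|$.

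Next I would control the difference $D_t := z^n_t - \tilde z^n_t$. Since both copies are driven by identical additive noise, every jump contribution cancels and $D_t$ solves the pathwise, absolutely continuous equation $D_t = D_0 + \int_0^t \bigl[f_2(X_n, z^n_s) - f_2(X_n, \tilde z^n_s)\bigr]\,ds$. Differentiating $|D_t|^2$ and invoking Hypothesis \textbf{H.3} with the first argument fixed at $X_n$ gives $\tfrac{d}{dt}|D_t|^2 = 2\langle f_2(X_n, z^n_t) - f_2(X_n, \tilde z^n_t),\, D_t\rangle \le -2\beta |D_t|^2$, whence $|D_t| \le |D_0|\,e^{-\beta t}$ almost surely by Gronwall's inequality. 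Because $F \in C^1_b$ is globally Lipschitz with constant $\|\nabla F\|_\infty$, this yields $|\mathbb{E}[F(z^n_t) - F(\tilde z^n_t)]| \le \|\nabla F\|_\infty\, e^{-\beta t}\, \mathbb{E}|D_0|$, and bounding $\mathbb{E}|D_0| \le |y_0| + \int |y|\,\mu^{X_n}(dy)$ produces the claimed form $C(1+|y_0|)e^{-\beta t}$, with the rate $\beta$ inherited directly from the dissipativity constant.

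The main obstacle is the uniform-in-$X_n$ control of the first moment $\int |y|\,\mu^{X_n}(dy)$, together with a clean justification of the stationary coupling. Here the heavy tails of the $\alpha$-stable noise rule out any appeal to an $L^2$ Lyapunov function, since the second moment is infinite; instead I would derive an $X_n$-uniform $p$-th moment bound for some $p \in (1,\alpha)$ along the lines of Lemma \ref{z} (using Hypothesis \textbf{H.2} to handle $f_2(X_n,0)$ and the dissipativity to close the estimate), and then pass to the invariant regime to obtain $\int |y|\,\mu^{X_n}(dy) \le C$ uniformly in $X_n$. Uniqueness of $\mu^{X_n}$, which is what makes the stationary copy well defined, follows from the very same contraction estimate. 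Once these moment bounds are secured, the remaining steps are routine.
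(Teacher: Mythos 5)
Your proposal is correct, but note that the paper itself contains no proof of Lemma \ref{avv}: the lemma is simply quoted from \cite[Proposition 3.5]{XS1}, so the only thing to compare against is the argument in that reference, and your synchronous-coupling proof is essentially that standard argument made explicit. The core steps are sound: because the noise in \eqref{znn} is additive, the difference $D_t$ of two solutions driven by the same path of $\widehat{L}^{\alpha,n}$ is pathwise absolutely continuous, Hypothesis \textbf{H.3} gives $\tfrac{d}{dt}|D_t|^2\le -2\beta|D_t|^2$ and hence the almost-sure contraction $|D_t|\le|D_0|e^{-\beta t}$ with no moment assumptions whatsoever, and the Lipschitz property of $F\in C^1_b$ transfers this to expectations at exactly the rate $\beta$ claimed in the lemma. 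You also correctly isolate where the real technical content lies, namely the bound $\int|y|\,\mu^{X_n}(dy)\le C$ uniformly in $X_n$. Two remarks on that step. First, you are right that one cannot invoke the paper's Lemma \ref{z} here: its constant comes from a plain Gronwall estimate with the Lipschitz constant and therefore grows with $T$, whereas the invariant-measure moment needs a time-uniform bound, which forces you to use the dissipativity of \textbf{H.3} together with \textbf{H.2}, and only for moments of order $p<\alpha$ because of the heavy tails --- exactly as you propose. Second, you should state explicitly that the stationary initial datum $\tilde z^n_0\sim\mu^{X_n}$ is chosen independent of the driving noise $\widehat{L}^{\alpha,n}$; with that, invariance gives $\mathbb{E}\,F(\tilde z^n_t)=\langle\mu^{X_n},F\rangle$ for all $t$ and your identity for the error term is legitimate. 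Finally, the uniformity in $X_n$ of your constant, which is what the application in Lemma \ref{fbar} actually requires, comes precisely from $\sup_{x}|f_2(x,0)|<\infty$, so your appeal to \textbf{H.2} at that point is the right one.
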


The next lemma establishes the mixing properties of the auxiliary process $z^{n}_{t}$.

\begin{lemma}[Mixing property]
\label{fbar}
Under Hypotheses $\bf{H.1}$-$\bf{H.3}$, for the small enough $\delta t$ and $1< p < \alpha $, we have
\begin{equation}
\left(\mathbb{E}\left|\frac{1}{M}\sum_{m=1}^{M}f_1(X_n, z^{n}_{m})-\bar{f}_1(X_n)\right|^{p}\right)^{\frac{1}{p}}\leq 2\sqrt{\frac{\ln(M\delta t)+\beta}{M\beta\delta t}}+\sqrt{\frac{1}{M}}.
\end{equation}
\end{lemma}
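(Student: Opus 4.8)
The plan is to read the left-hand side as the $L^p$ norm of a time average of the mixing sequence $\Phi_m := f_1(X_n, z^n_m) - \bar f_1(X_n)$, $m=1,\dots,M$, where $z^n_m = z^n_{t_m}$ is the auxiliary diffusion \eqref{znn} sampled at the grid points $t_m = m\delta t$. Because $f_1(X_n,\cdot)\in C^1_b$, each $\Phi_m$ is bounded and is centered with respect to the invariant measure, since $\langle \mu^{X_n}, f_1(X_n,\cdot)\rangle = \bar f_1(X_n)$. The two ingredients I would rely on are Lemma \ref{avv} (exponential ergodicity of $z^n_t$ with rate $\beta$) together with the Markov property of $z^n_t$, which control correlations across a prescribed time lag, and Lemma \ref{z}, which supplies the only integrability available for $z^n_t$: moments of order $p<\alpha$ only, since the $\alpha$-stable driver has no second moment.

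The heart of the argument is a decorrelation-lag (blocking) decomposition. Fixing an integer lag $\tau$, I would write
\begin{equation}
\frac1M\sum_{m=1}^M \Phi_m = \frac1M\sum_{m=1}^M\Bigl(\Phi_m - \mathbb{E}[\Phi_m\mid\mathcal{F}_{t_{m-\tau}}]\Bigr) + \frac1M\sum_{m=1}^M \mathbb{E}[\Phi_m\mid\mathcal{F}_{t_{m-\tau}}] =: S_1 + S_2.
\end{equation}
For the bias $S_2$, the Markov property gives $\mathbb{E}[\Phi_m\mid\mathcal{F}_{t_{m-\tau}}] = (P^{X_n}_{\tau\delta t}f_1(X_n,\cdot))(z^n_{t_{m-\tau}}) - \bar f_1(X_n)$, so Lemma \ref{avv} bounds each summand by $C(1+|z^n_{t_{m-\tau}}|)e^{-\beta\tau\delta t}$; taking the $L^p$ norm and invoking the uniform moment bound of Lemma \ref{z} (with $1<p<\alpha$, which keeps the single factor $1+|z^n|$ integrable) yields $\|S_2\|_p \le C e^{-\beta\tau\delta t}$. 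For the fluctuation $S_1$, the centered increments $D_m := \Phi_m - \mathbb{E}[\Phi_m\mid\mathcal{F}_{t_{m-\tau}}]$ are bounded and satisfy $\mathbb{E}[D_m\mid\mathcal{F}_{t_{m-\tau}}]=0$, so any two of them separated by more than $\tau$ steps are uncorrelated. Hence $\|S_1\|_p \le \|S_1\|_2 \le C\sqrt{\tau/M}$, while the zero-lag diagonal contributes the residual $\sqrt{1/M}$.

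Collecting the two pieces gives a bound of the shape $C\sqrt{\tau/M} + \sqrt{1/M} + Ce^{-\beta\tau\delta t}$, and the logarithm in the announced estimate is produced by optimizing the lag so that the (negligible) bias sits outside the square root: choosing $\tau = (\ln(M\delta t)+\beta)/(\beta\delta t)$ gives $\sqrt{\tau/M} = \sqrt{(\ln(M\delta t)+\beta)/(M\beta\delta t)}$ and $e^{-\beta\tau\delta t} = e^{-\beta}/(M\delta t)$, which is of smaller order. After absorbing the constants (including the $|y_0|$-dependence inherited from Lemmas \ref{avv} and \ref{z}, or by starting $z^n$ at stationarity) this leaves exactly $2\sqrt{(\ln(M\delta t)+\beta)/(M\beta\delta t)} + \sqrt{1/M}$.

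The main obstacle is precisely the absence of a second moment for the $\alpha$-stable driven process $z^n_t$: the naive variance identity $\mathbb{E}\bigl|\tfrac1M\sum_m\Phi_m\bigr|^2 = \tfrac1{M^2}\sum_{m,l}\mathbb{E}[\langle\Phi_m,\Phi_l\rangle]$ cannot be used head-on, because the ergodicity estimate of Lemma \ref{avv} carries a factor $1+|z^n|$ whose square is not integrable. The decomposition above is engineered so that every heavy-tailed factor is routed into $S_2$, where a single power of $1+|z^n|$ is controlled in $L^p$ with $p<\alpha$, while the genuinely $L^2$ computation is confined to the bounded fluctuations $D_m$. One must also check that the exponential rate $\beta$ is uniform in the frozen slow variable $X_n$ and in the lag, so that balancing $\tau$ is legitimate; securing this uniformity, and the near-orthogonality of widely separated samples in spite of the jumps of $L^\alpha$, is where the $\alpha$-stable structure enters most delicately.
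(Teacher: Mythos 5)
Your proposal is correct in substance and arrives at the stated bound, but it follows a genuinely different decomposition than the paper's proof, so a comparison is worthwhile. The paper argues head-on: writing $\Phi_m := f_1(X_n,z^n_m)-\bar f_1(X_n)$, it first uses Jensen's inequality to pass from the $L^p$ norm to the $L^2$ norm (legitimate since $p<\alpha<2$), then expands $\mathbb{E}\left|\frac{1}{M}\sum_{m=1}^{M}\Phi_m\right|^2$ as a double sum of correlations, splits it into off-diagonal ($J_1$) and diagonal ($J_2$) parts, bounds the correlation at lag $k$ by $e^{-\beta k\delta t}$ via exactly the two ingredients you use (the Markov property of $z^n_t$ plus Lemma \ref{avv}), cuts the off-diagonal sum at a lag $N$, and optimizes $N=\ln(M\delta t)/(\beta\delta t)$ -- the same choice as your $\tau$, producing the same logarithmic factor; $J_2$ yields the $\sqrt{1/M}$ term. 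Your bias/fluctuation splitting $S_1+S_2$ via the conditional expectations $\mathbb{E}[\Phi_m\,|\,\mathcal{F}_{t_{m-\tau}}]$ replaces this direct covariance expansion by a martingale-type orthogonality argument for $S_1$ and an $L^p$ estimate for $S_2$. What your route buys is precisely the point you flag: in the paper's computation, bounding the correlation $\mathbb{E}\left[\Phi_m\cdot\mathbb{E}_{z^n_m}\Phi_{l}\right]$ through Lemma \ref{avv} produces the heavy-tailed factor $(1+|z^n_m|)$, so one must integrate $|\Phi_m|(1+|z^n_m|)$, and the paper absorbs this silently into constants -- this is delicate under $\alpha$-stable tails unless $f_1(X_n,\cdot)$ is treated as bounded; your decomposition routes that factor into $S_2$, which is estimated in $L^p$ with $p<\alpha$ via Lemma \ref{z}, and confines the genuine $L^2$ computation to the bounded differences $D_m$. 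Two caveats: your $S_1$ estimate still requires $\Phi_m$ to be bounded (your reading of $C^1_b$; under the paper's stated definition only the derivatives of $f_1$ are bounded, so both proofs share this gap, and both also absorb the $(1+|y_0|)$-dependent constants into the explicit $2$ and $1$ of the statement); and for $m\le\tau$ the conditioning must be taken at time $0$, giving edge terms of order $\tau/M$, which are dominated by $\sqrt{\tau/M}$ once $\tau/M\le 1$ -- worth a sentence in a complete write-up, but not a real obstruction.
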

\begin{proof}
By the property of expectation, we have
\begin{equation}
\begin{aligned}
&\left(\mathbb{E}\left|\frac{1}{M}\sum_{m=1}^{M}f_1(X_n, z^{n}_{m})-\bar{f}_1(X_n)\right|^{p}\right)^{\frac{1}{p}}\\
&\leq \left(\mathbb{E}\left|\frac{1}{M}\sum_{m=1}^{M}f_1(X_n, z^{n}_{m})-\bar{f}_1(X_n)\right|^{2}\right)^{\frac{1}{2}}\\
& \leq \frac{1}{M}\left(\sum_{m=1}^{M}\sum_{l=1}^{M}\left\{\mathbb{E}\left[f_1(X_n, z^{n}_{m})-\bar{f}_1(X_n)\right]\cdot \left[f_1(X_n, z^{n}_{l})-\bar{f}_1(X_n)\right]\right\}\right)^{\frac{1}{2}}\\
& \leq \frac{\sqrt{2}}{M}\left(\sum_{m=1}^{M}\sum_{l=m+1}^{M}\left\{\mathbb{E}\left[f_1(X_n, z^{n}_{m})-\bar{f}_1(X_n)\right]\cdot \left[f_1(X_n, z^{n}_{l})-\bar{f}_1(X_n)\right]\right\}\right)^{\frac{1}{2}}\\
&+\frac{1}{M}\left(\sum_{m=1}^{M}\left\{\mathbb{E}\left[f_1(X_n, z^{n}_{m})-\bar{f}_1(X_n)\right]\cdot \left[f_1(X_n, z^{n}_{m})-\bar{f}_1(X_n)\right]\right\}\right)^{\frac{1}{2}}\\
&:=\frac{\sqrt{2}}{M}\sqrt{J_1}+\frac{1}{M}\sqrt{J_2}.
\end{aligned}
\end{equation}
For $J_1$, by the Markov property and Hypotheses $\bf{H.2}$, we have
\begin{equation}
\begin{aligned}
& \sum_{m=1}^{M}\sum_{l=m+1}^{M}\{\mathbb{E}\left[f_1(X_n, z^{n}_{m})-\bar{f}_1(X_n)\right]\cdot \left[f_1(X_n, z^{n}_{l})-\bar{f}_1(X_n)\right]\}\\
& \leq \sum_{m=1}^{M}\sum_{l=m+1}^{M}\mathbb{E}\{\left[f_1(X_n, z^{n}_{m})-\bar{f}_1(X_n)\right]\cdot \mathbb{E}_{z^{n}_{m}}\left[f_1(X_n, z^{n}_{l-m})-\bar{f}_1(X_n)\right]\}\\
& \leq \sum_{m=1}^{M}\sum_{l=m+1}^{m+N} e^{-\beta(l-m)\delta t}+\sum_{m=1}^{M}\sum_{l=m+N+1}^{M} e^{-\beta(l-m)\delta t}\\
& \leq  M N+M^2e^{-\beta N\delta t}.
\end{aligned}
\end{equation}
Set $N=\frac{\ln(M \delta t)}{\beta \delta t}$, then we have
\begin{equation}
\label{j1}
 \sum_{m=1}^{M}\sum_{l=m+1}^{M}\left\{\mathbb{E}\left[f_1(X_n, z^{n}_{m})-\bar{f}_1(X_n)\right]\cdot
 \left[f_1(X_n, z^{n}_{l})-\bar{f}_1(X_n)\right]\right\} \leq \frac{ M ln(M \delta t)}{\beta \delta t}+\frac{M}{\delta t}.
\end{equation}
Similarly, for $J_2$, we have
\begin{equation}
\label{j2}
J_2 \leq \sum_{m=1}^{M}e^{-\beta m\delta t} \leq M.
\end{equation}
Combined with \eqref{j1} and \eqref{j2}, we have
\begin{equation}
\left(\mathbb{E}\left|\frac{1}{M}\sum_{m=1}^{M}f_1(X_n, z^{n}_{m})-\bar{f}_1(X_n)\right|^{p}\right)^{\frac{1}{p}} \leq
2\sqrt{\frac{\ln(M\delta t)+\beta}{M\beta\delta t}}+\sqrt{\frac{1}{M}}
\end{equation}
\end{proof}

Next we will establish the deviation between \eqref{znn} and its numerical approximation \eqref{ymicro}.

\begin{lemma}[Deviation between the microsolver and auxiliary process]
\label{lemma7}
Let $z^{n}_{t}$ be the family of process defined by \eqref{znn}. For small enough $\delta t$ and $1< p < \alpha$, we have
\begin{equation}
\max_{0 \leq n \leq \lfloor \frac{T}{\triangle t}\rfloor}\mathbb{E}|Y^{n}_{m}-z^{n}_{m}|^{p} \leq C (\delta t)^{\frac{p}{\alpha}}
\end{equation}
\end{lemma}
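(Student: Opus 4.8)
The plan is to read this as a strong (pathwise $L^p$) error estimate for the Euler--Maruyama scheme \eqref{ymicro} against the auxiliary SDE \eqref{znn}, coupling the two so that they are driven by the \emph{same} realization of $\widehat{L}^{\alpha,n}$ and started from the same point $y_0$. Writing $e_m:=Y^n_m-z^n_{t_m}$ with $t_m=m\delta t$, I would subtract the integrated form of \eqref{znn} on $[t_m,t_{m+1}]$ from the one-step update \eqref{ymicro}. Because the noise increments $\sigma_2\triangle\widehat{L}^{\alpha,n}_m$ and $\sigma_2(\widehat{L}^{\alpha,n}_{t_{m+1}}-\widehat{L}^{\alpha,n}_{t_m})$ are identical, they cancel exactly, leaving the purely drift-driven recursion
\[
e_{m+1}=e_m+\int_{t_m}^{t_{m+1}}\left[f_2(X_n,Y^n_m)-f_2(X_n,z^n_s)\right]ds .
\]
I would then split the local increment into $A_m:=[f_2(X_n,Y^n_m)-f_2(X_n,z^n_{t_m})]\delta t$, Lipschitz-controlled by $L\delta t\,|e_m|$ via Hypothesis H.1, and $B_m:=\int_{t_m}^{t_{m+1}}[f_2(X_n,z^n_{t_m})-f_2(X_n,z^n_s)]ds$, bounded by $L\int_{t_m}^{t_{m+1}}|z^n_{t_m}-z^n_s|\,ds$, which measures the time-regularity of the true solution.

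The crucial estimate is the $L^p$-modulus of continuity of the auxiliary process: for $s\in[t_k,t_{k+1}]$,
\[
\mathbb{E}\,|z^n_s-z^n_{t_k}|^p\leq C(\delta t)^{p/\alpha}.
\]
I would derive this from the integrated form of \eqref{znn}, bounding the drift part by $C(\delta t)^p$ using Hölder in time, the linear growth of $f_2$ (Remark 1), and the moment bounds of Lemma \ref{z} and Lemma \ref{xn}; and bounding the noise part by $C(s-t_k)^{p/\alpha}\leq C(\delta t)^{p/\alpha}$ through the self-similarity $\widehat{L}^{\alpha,n}_s-\widehat{L}^{\alpha,n}_{t_k}\stackrel{d}{=}(s-t_k)^{1/\alpha}\widehat{L}^{\alpha,n}_1$. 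Since $p\in(1,\alpha)$ the moment $\mathbb{E}|\widehat{L}^{\alpha,n}_1|^p$ is finite, and $(\delta t)^{p/\alpha}$ dominates $(\delta t)^p$ for small $\delta t$, so the jump noise sets the rate.

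Telescoping $e_m=\sum_{k=0}^{m-1}(A_k+B_k)$ from $e_0=0$ and taking $p$-th moments, I would apply a discrete Hölder inequality to each sum. The $A$-part gives $\mathbb{E}|\sum_k A_k|^p\leq C\delta t\sum_{k=0}^{m-1}\mathbb{E}|e_k|^p$, and the $B$-part, fed by the modulus-of-continuity estimate, gives $\mathbb{E}|\sum_k B_k|^p\leq C(m\delta t)^p(\delta t)^{p/\alpha}\leq C(\delta t)^{p/\alpha}$ since $m\delta t$ remains bounded by the fixed micro-integration window. This places $u_m:=\mathbb{E}|e_m|^p$ into the form $u_m\leq C\delta t\sum_{k=0}^{m-1}u_k+C(\delta t)^{p/\alpha}$, and the discrete Gronwall inequality stated just before Lemma \ref{y} closes the argument with $u_m\leq C(\delta t)^{p/\alpha}e^{Cm\delta t}\leq C(\delta t)^{p/\alpha}$, uniformly in $n$.

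I expect the main obstacle to be the $L^p$-treatment of the $\alpha$-stable increment: with no Itô isometry available, one must lean on the scaling property and the strict constraint $p<\alpha$ to keep $\mathbb{E}|\widehat{L}^{\alpha,n}_1|^p$ finite, which is precisely what forces the exponent $(\delta t)^{p/\alpha}$ rather than a half-order rate. A secondary subtlety is the power-counting in the $B$-sum, where $m\sim(\delta t)^{-1}$ local contributions of size $(\delta t)^{p+p/\alpha}$ must be accumulated without degrading the exponent; here one should also verify that the drift contribution $(\delta t)^p$ is genuinely subdominant and, if one wants the Gronwall constant to stay controlled over a long micro-window, invoke the dissipativity of $f_2$ (Hypothesis H.3) to prevent exponential growth in the fast horizon.
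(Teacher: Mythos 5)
Your proof is correct, and it rests on the same core mechanism as the paper's: couple the microsolver \eqref{ymicro} and the auxiliary equation \eqref{znn} through the same realization of $\widehat{L}^{\alpha,n}$ so that the stable increments cancel exactly, reduce the error to a drift-only term, and close with a Lipschitz-plus-Gronwall argument in which the rate $(\delta t)^{p/\alpha}$ is forced by the scaling $\mathbb{E}|\widehat{L}^{\alpha,n}_{\delta t}|^{p}=C(\delta t)^{p/\alpha}$ for $p<\alpha$. The difference is in the decomposition, which is a mirror image of the paper's. The paper interpolates the Euler scheme into continuous time, $Y^{n}_{t}=Y^{n}_{0}+\int^{t}_{0}f_2(X_{n}, Y^n_{\lfloor s/\delta t\rfloor\delta t})ds+\sigma_2\widehat{L}^{\alpha,n}_{t}$, inserts the term $f_2(X_n,Y^{n}_{s})$, so that the local error is the modulus of continuity of the Euler \emph{interpolant}, $\mathbb{E}|Y^{n}_{s}-Y^{n}_{\lfloor s/\delta t\rfloor\delta t}|^{p}\leq C(\delta t)^{p/\alpha}$ (controlled via the microsolver bound, Lemma \ref{y}, and Lemma \ref{xn}), and then applies the continuous Gronwall inequality to $v_t=Y^{n}_{t}-z^{n}_{t}$. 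You instead stay in discrete time, insert $f_2(X_n,z^{n}_{t_m})$, so that the local error is the modulus of continuity of the \emph{exact} auxiliary process, $\mathbb{E}|z^{n}_{s}-z^{n}_{t_k}|^{p}\leq C(\delta t)^{p/\alpha}$ (controlled via Lemma \ref{z} and Lemma \ref{xn}), and close with the discrete Gronwall inequality stated before Lemma \ref{y}. The two routes are essentially interchangeable: in both, the drift contributes $(\delta t)^{p}$, which is subdominant to the noise's $(\delta t)^{p/\alpha}$; yours avoids introducing the continuous-time interpolant and uses exactly the discrete Gronwall lemma the paper provides, while the paper's version keeps all regularity requirements on the computable object $Y^n$ rather than on $z^n$. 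One small remark: your final appeal to Hypothesis $\bf{H.3}$ to tame the Gronwall constant is unnecessary, since the micro-integration window is $M\delta t=\triangle t\leq T$, so the factor $e^{Cm\delta t}$ is bounded exactly as in the paper.
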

\begin{proof}
Set
\begin{equation}
Y^{n}_{t}=Y^{n}_{0}+\int^{t}_{0}f_2(X_{n}, Y^n_{ \lfloor s/\delta t\rfloor\delta t})ds +\sigma_2 {\widehat{L}}^{\alpha,n}_t.
\end{equation}
Then $Y^{n}_{t}$ is the Euler-Maruyama approximation of $Y^{n}_{m}$.

Define $v_t=Y^{n}_{t}-z^{n}_{t}$, then we have
\begin{equation}
\begin{aligned}
\mathbb{E}|v_t|^{p}&\leq \int ^{t}_{0}\mathbb{E}\left|f_2(X_{n}, Y^n_{ \lfloor s/\delta t\rfloor\delta t})-f_2(X_{n},z^{n}_s)\right|^p ds\\
&\leq C_{p,T}\int ^{t}_{0}\mathbb{E}\left|f_2(X_{n}, Y^n_{ \lfloor s/\delta t\rfloor\delta t})-f_2(X_{n},Y^{n}_s)\right|^p ds+C_p\int ^{t}_{0}\mathbb{E}\left|f_2(X_{n}, Y^{n}_s)-f_2(X_{n},z^{n}_s)\right|^p ds \\
&\leq C_{p, L, T}\int ^{t}_{0}\mathbb{E}\left| Y^{n}_{ \lfloor s/\delta t\rfloor\delta t}-Y^{n}_s\right|^p ds +C_{p, L} \int^{t}_{0}\mathbb{E}|v_s|^{p}ds.
\end{aligned}
\end{equation}
By Gronwall's inequality, we have
\begin{equation}
\label{vt}
\mathbb{E}|v_t|^{p}\leq C_{p, L, T} \int ^{t}_{0}\mathbb{E}\left| Y^{n}_{ \lfloor s/\delta t\rfloor\delta t}-Y^{n}_s\right|^p ds.
\end{equation}
Using Lemma \ref{y} and Lemma \ref{xn}, we have
\begin{equation}
\label{ych}
\begin{aligned}
\mathbb{E}\left| Y^{n}_t-Y^{n}_{ \lfloor t/\delta t\rfloor\delta t}\right|^p &\leq C_{p,T}\int^{t}_{\lfloor t/\delta t\rfloor\delta t}\mathbb{E}
\left|f_2\left(X_{n}, Y^{n}_{\lfloor s/\delta t\rfloor\delta t}\right)\right|^{p}ds +C_{p, \sigma_2}\mathbb{E}\left|L^{\alpha,n}_{t-\lfloor t/\delta t\rfloor\delta t}\right|^p\\
& \leq C_{p,T}\int^{t}_{\lfloor t/\delta t\rfloor\delta t}\mathbb{E}
\left|f_2\left(X_{n}, Y^{n}_{\lfloor s/\delta t\rfloor\delta t}\right)\right|^{p}ds +C_{p, \sigma_2}|\delta t|^{\frac{p}{\alpha}}\\
& \leq C_{p,T}\int^{t}_{\lfloor t/\delta t\rfloor\delta t} \left(1+\mathbb{E}|X_n|^{p}+\mathbb{E}|Y^{n}_{\lfloor s/\delta t\rfloor\delta t}|^p\right)ds+C_{p, \sigma_2}|\delta t|^{\frac{p}{\alpha}}\\
& \leq C\left(1+(\delta t)^{\frac{p}{\alpha}}+(\Delta t)^{\frac{p}{\alpha_1}}\right)\delta t+C_{p, \sigma_2}|\delta t|^{\frac{p}{\alpha}}\\
& \leq C (\delta t)^{\frac{p}{\alpha}}.
\end{aligned}
\end{equation}
Take \eqref{ych} into \eqref{vt}, we have
\begin{equation}
\max_{0 \leq n \leq \lfloor \frac{T}{\triangle t}\rfloor}\mathbb{E}|Y^{n}_{m}-z^{n}_{m}|^{p} \leq C (\delta t)^{\frac{p}{\alpha}}.
\end{equation}
\end{proof}
In the following, we will give the approximation estimate between $A(X_{n})$ and $\bar{f}_{1}(X_n)$.
\begin{lemma}[Approximation estimate for the drift coefficient]
\label{er}
Under Hypotheses $\bf{H.1}$-$\bf{H.3}$, for any $0\leq n \leq \lfloor T/\triangle t\rfloor$ and $1< p < \alpha$, we have
\begin{equation}
\mathbb{E}\left|A(X_{n})-\bar{f}_{1}(X_n)\right|^{p} \leq C\left((\delta t)^{\frac{p}{\alpha}}+\left(\frac{\ln(M\delta t)+\beta }{M\beta\delta t}\right)^{\frac{p}{2}}+\left(\frac{1}{M}\right)^{\frac{p}{2}}\right).
\end{equation}
\end{lemma}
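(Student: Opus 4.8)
The plan is to split $A(X_n)-\bar f_1(X_n)$ through the auxiliary process $z^n_t$, so that each resulting piece is controlled by exactly one of the two preparatory lemmas already established. Concretely, I would write
\begin{equation*}
A(X_n)-\bar f_1(X_n)=\frac{1}{M}\sum_{m=1}^{M}\left[f_1(X_n,Y^n_m)-f_1(X_n,z^n_m)\right]+\left[\frac{1}{M}\sum_{m=1}^{M}f_1(X_n,z^n_m)-\bar f_1(X_n)\right],
\end{equation*}
and then apply the elementary $C_p$-inequality $|a+b|^p\leq C_p(|a|^p+|b|^p)$ to decouple the discretization error (first bracket) from the mixing/ergodic error (second bracket).

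For the first term, I would first use Jensen's inequality (convexity of $t\mapsto|t|^p$ for $p>1$) to pull the power inside the empirical average,
\begin{equation*}
\mathbb{E}\left|\frac{1}{M}\sum_{m=1}^{M}\left[f_1(X_n,Y^n_m)-f_1(X_n,z^n_m)\right]\right|^p\leq\frac{1}{M}\sum_{m=1}^{M}\mathbb{E}\left|f_1(X_n,Y^n_m)-f_1(X_n,z^n_m)\right|^p,
\end{equation*}
then invoke the global Lipschitz bound on $f_1$ from Remark 1 to dominate each summand by $L^p\,\mathbb{E}|Y^n_m-z^n_m|^p$, and finally apply Lemma \ref{lemma7} to bound this uniformly in $m$ and $n$ by $C(\delta t)^{p/\alpha}$. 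Averaging $M$ identical bounds returns $C(\delta t)^{p/\alpha}$, which reproduces the first term on the right-hand side of the claim.

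For the second term, I would simply raise both sides of the estimate in Lemma \ref{fbar} to the power $p$ and use $(a+b)^p\leq C_p(a^p+b^p)$ once more, yielding
\begin{equation*}
\mathbb{E}\left|\frac{1}{M}\sum_{m=1}^{M}f_1(X_n,z^n_m)-\bar f_1(X_n)\right|^p\leq C\left(\left(\frac{\ln(M\delta t)+\beta}{M\beta\delta t}\right)^{p/2}+\left(\frac{1}{M}\right)^{p/2}\right),
\end{equation*}
which matches the remaining two terms exactly. Combining the two contributions completes the argument.

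Honestly there is no serious analytic obstacle here: the genuine work — the uniform moment bound on $Y^n_m-z^n_m$ and the mixing estimate for the auxiliary chain — has already been carried out in Lemmas \ref{lemma7} and \ref{fbar}, so this lemma is essentially a bookkeeping assembly of those two results through the triangle inequality. The one point that deserves a little care is the uniformity in $n$: both input lemmas supply bounds that hold uniformly over $0\leq n\leq\lfloor T/\triangle t\rfloor$, and I would take care that the constant $C$ absorbing $L^p$, $C_p$, and $\beta$ is chosen independently of $n$, so that the final estimate is valid for every coarse step simultaneously.
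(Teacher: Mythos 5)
Your proposal is correct and follows essentially the same route as the paper: the identical decomposition through the auxiliary process $z^n_m$, the $C_p$-inequality, the Lipschitz bound on $f_1$ combined with Lemma \ref{lemma7} for the discretization piece, and Lemma \ref{fbar} raised to the power $p$ for the mixing piece. The only cosmetic difference is that you make the Jensen step explicit where the paper passes directly to $\max_{m\leq M}\mathbb{E}|Y^n_m-z^n_m|^p$, which amounts to the same estimate.
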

\begin{proof}
By definition  of $A(X_{n})$, Lemma \ref{fbar} and Lemma \ref{lemma7}, we have
\begin{equation}
\begin{aligned}
&\mathbb{E}\left|A(X_{n})-\bar{f}_{1}(X_n)\right|^{p} \\
&\leq C_{p}\mathbb{E}\left|\frac{1}{M}\sum_{m=1}^{M}f_1(X_n, Y^{n}_{m})-\frac{1}{M}\sum_{m=1}^{M}{f}_{1}(X_n, z^{n}_{m})\right|^{p} +C_{p}\mathbb{E}\left|\frac{1}{M}\sum_{m=1}^{M}{f}_{1}(X_n, z^{n}_{m})-\bar{f}_{1}(X_n)\right|^{p}\\
& \leq C_{p, L}\max_{m\leq M}\mathbb{E}\left|Y^{n}_{m}-z^{n}_{m}\right|^{p}+C_{p}\left(2\sqrt{\frac{\ln(M\delta t)+\beta}{M\beta\delta t}}+\sqrt{\frac{1}{M}}\right)^{p}\\
& \leq C(\delta t)^{\frac{p}{\alpha}}+C_{p}\left(2\sqrt{\frac{\ln(M\delta t)}{M\beta\delta t}+\frac{1}{M\delta t}}+\sqrt{\frac{1}{M}}\right)^{p}\\
& \leq C(\delta t)^{\frac{p}{\alpha}}+C\left(\frac{\ln(M\delta t)+\beta }{M\beta\delta t}\right)^{\frac{p}{2}}+C\left(\frac{1}{M}\right)^{\frac{p}{2}} \\
& \leq C\left((\delta t)^{\frac{p}{\alpha}}+\left(\frac{\ln(M\delta t)+\beta }{M\beta\delta t}\right)^{\frac{p}{2}}+\left(\frac{1}{M}\right)^{\frac{p}{2}}\right).
 \end{aligned}
 \end{equation}
\end{proof}

The following lemma shows the function $\bar{f}_1$ has Lipschitz property.
\begin{lemma}[Lipschitz property]
\label{lipc}
Under Hypotheses $\bf{H.1}$-$\bf{H.3}$, the functions $\bar{f}_1$  satisfies Lipschitz condition.
\end{lemma}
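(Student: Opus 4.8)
The plan is to establish the Lipschitz bound $|\bar f_1(x_1)-\bar f_1(x_2)|\le C|x_1-x_2|$ by exploiting the probabilistic representation of the averaged coefficient through the frozen fast dynamics together with the dissipativity of Hypothesis \textbf{H.3}. For each fixed $x\in\mathbb{R}^n$, let $Y^{x}_t$ denote the solution of the frozen equation
\[
dY^{x}_t=f_2(x,Y^{x}_t)\,dt+\sigma_2\,d\widehat{L}^{\alpha}_t,\qquad Y^{x}_0=y,
\]
which is precisely the auxiliary dynamics \eqref{znn} with the slow variable frozen at $x$. By the exponential ergodicity recorded in Lemma \ref{avv}, together with the uniform moment bound of Lemma \ref{z}, one has the representation $\bar f_1(x)=\lim_{t\to\infty}\mathbb{E}[f_1(x,Y^{x}_t)]$. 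Hence it suffices to estimate $\mathbb{E}[f_1(x_1,Y^{x_1}_t)]-\mathbb{E}[f_1(x_2,Y^{x_2}_t)]$ uniformly in $t$ and then let $t\to\infty$.

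The key device is a \emph{synchronous coupling}: I drive both $Y^{x_1}_t$ and $Y^{x_2}_t$ by the same $\alpha$-stable process $\widehat{L}^{\alpha}_t$ and start them from the same point $y$. Because the noise enters additively, the jump parts of the two processes are identical and cancel in the difference $V_t:=Y^{x_1}_t-Y^{x_2}_t$; thus $V_t$ carries no jump component and is pathwise absolutely continuous with $V_0=0$ and $\tfrac{d}{dt}V_t=f_2(x_1,Y^{x_1}_t)-f_2(x_2,Y^{x_2}_t)$. Applying the chain rule to $|V_t|^2$ and splitting $f_2(x_1,Y^{x_1}_t)-f_2(x_2,Y^{x_2}_t)=[f_2(x_1,Y^{x_1}_t)-f_2(x_1,Y^{x_2}_t)]+[f_2(x_1,Y^{x_2}_t)-f_2(x_2,Y^{x_2}_t)]$, I bound the first bracket by the dissipativity estimate of Hypothesis \textbf{H.3} (yielding $-\beta|V_t|^2$) and the second by the Lipschitz continuity of $f_2$ in $x$ from Remark 1. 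With Young's inequality this produces the pathwise differential inequality
\[
\frac{d}{dt}|V_t|^2\le -\beta|V_t|^2+\frac{L^2}{\beta}|x_1-x_2|^2.
\]

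Integrating this inequality (equivalently, a comparison/Gronwall argument) and using $V_0=0$ gives the crucial \emph{uniform-in-time} bound $|V_t|^2\le (L^2/\beta^2)|x_1-x_2|^2$, that is $\mathbb{E}|Y^{x_1}_t-Y^{x_2}_t|\le (L/\beta)|x_1-x_2|$ for all $t\ge 0$. Combining this with the Lipschitz continuity of $f_1$ in both arguments (Remark 1),
\[
\big|\mathbb{E}[f_1(x_1,Y^{x_1}_t)]-\mathbb{E}[f_1(x_2,Y^{x_2}_t)]\big|\le L|x_1-x_2|+L\,\mathbb{E}|Y^{x_1}_t-Y^{x_2}_t|\le L\Big(1+\frac{L}{\beta}\Big)|x_1-x_2|,
\]
uniformly in $t$. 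Letting $t\to\infty$ and invoking the ergodic representation of $\bar f_1$ yields $|\bar f_1(x_1)-\bar f_1(x_2)|\le C|x_1-x_2|$ with $C=L(1+L/\beta)$, which is the claimed Lipschitz property.

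The main obstacle is obtaining a bound on $\mathbb{E}|Y^{x_1}_t-Y^{x_2}_t|$ that is uniform in $t$ rather than growing like $e^{Ct}$, since this is what legitimizes the passage $t\to\infty$. Two structural features make it work and must be used carefully: first, the additive nature of the $\alpha$-stable forcing, which under the synchronous coupling cancels the jumps and lets me treat $V_t$ as a differentiable path (so there is no Itô jump correction and, crucially, no need for any finite moment of the $\alpha$-stable increments, which would fail for exponents $\ge\alpha$); second, the strong dissipativity of Hypothesis \textbf{H.3}, whose contraction rate $\beta$ must dominate the Lipschitz perturbation arising from freezing at two distinct slow values. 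A minor technical point to settle is the ergodic representation $\bar f_1(x)=\lim_{t\to\infty}\mathbb{E}[f_1(x,Y^{x}_t)]$ for the linearly growing (rather than bounded) function $f_1$; this can be justified by a truncation argument combined with the uniform moment bound of Lemma \ref{z}.
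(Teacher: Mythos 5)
Your proof is correct and takes essentially the same route as the paper: both represent $\bar f_1(x)$ as the long-time limit of $\mathbb{E}f_1(x,\cdot)$ along the frozen fast dynamics via the exponential ergodicity of Lemma \ref{avv}, use the Lipschitz continuity of $f_1$ from Hypothesis \textbf{H.1}, reduce everything to a bound on $\mathbb{E}\left|Y^{x_1}_t-Y^{x_2}_t\right|$ that is uniform in $t$, and conclude by letting $t\to\infty$. The only difference is one of completeness: the paper asserts the key inequality $\mathbb{E}\left|z^{n}_{s,x_1}-z^{n}_{s,x_2}\right|\leq C|x_1-x_2|$ without justification, whereas you supply the missing argument (synchronous coupling, cancellation of the additive jumps, dissipativity of Hypothesis \textbf{H.3} plus Young's inequality), which is precisely what makes the bound uniform in time and the passage to the limit legitimate.
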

\begin{proof}
For any $t>0$, $x_1, x_2 \in \mathbb{R}^{n}$, by Hypotheses $\bf{H.1}$ and Lemma \ref{avv}, we have
\begin{equation}
\label{0f}
\begin{aligned}
\left| \bar{f}_1(x_1)-\bar{f}_1(x_2)\right|&\leq \left|\bar{f}_1(x_1)-\mathbb{E}f_1(x_1, z^{n}_{s,x_1})\right|
+\left|\mathbb{E}f_1(x_2, z^{n}_{s,x_2})-\bar{f}_1(x_2)\right|\\
&+\left|\mathbb{E}f_1(x_1, z^{n}_{s,x_1})-f_1(x_2, z^{n}_{s,x_2})\right|\\
&\leq Ce^{-\beta t}+C\left(|x_1-x_2|+\mathbb{E}|z^{n}_{s,x_1}-z^{n}_{s,x_2}|\right)\\
& \leq Ce^{-\beta t}+C|x_1-x_2|.
\end{aligned}
\end{equation}
Letting $t\rightarrow \infty$, then we know the function $\bar{f}_1$ has the Lipschitz property.
\end{proof}

Next we will give the rate of strong convergence for the multiscale method.
\begin{theorem}
Under Hypotheses $\bf{H.1}$-$\bf{H.3}$, for any  $0\leq n \leq \lfloor T/\triangle t\rfloor$ and $1< p < \alpha$, we have
\begin{equation}
\sup_{0\leq n \leq \lfloor T/\triangle t\rfloor}\mathbb{E}\left|X_n-\bar{X}_n\right|^{p} \leq C\left(\left(\delta t\right)^{\frac{p}{\alpha}}+\left(\frac{\ln(M\delta t)+\beta }{M\beta\delta t}\right)^{\frac{p}{2}}+\left(\frac{1}{M}\right)^{\frac{p}{2}}\right),
\end{equation}
where $\delta t$ is the step size of the microsolver, and $M= \triangle t/{\delta t }$.
\end{theorem}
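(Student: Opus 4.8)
The plan is to compare the two discrete recursions directly, exploiting the fact that the macro-solver \eqref{org} and the effective Euler--Maruyama scheme \eqref{macro} are driven by the \emph{same} stable increment $\triangle L^{\alpha}_n$. Subtracting \eqref{macro} from \eqref{org} cancels the noise term entirely, leaving the purely drift-driven difference
\begin{equation*}
X_{n+1}-\bar{X}_{n+1}=(X_n-\bar{X}_n)+\left(A(X_n)-\bar{f}_1(\bar{X}_n)\right)\triangle t.
\end{equation*}
This is the decisive simplification: because no $p$-th moment of $\triangle L^{\alpha}_n$ ever enters the difference equation, we avoid all the delicate heavy-tail estimates and reduce the problem to a deterministic-looking recursion in the random quantities $A(X_n)$ and $\bar{f}_1$.

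Next I would split the drift increment into a consistency error and a Lipschitz term,
\begin{equation*}
A(X_n)-\bar{f}_1(\bar{X}_n)=\left(A(X_n)-\bar{f}_1(X_n)\right)+\left(\bar{f}_1(X_n)-\bar{f}_1(\bar{X}_n)\right),
\end{equation*}
and iterate from $X_0=\bar{X}_0=x_0$ to obtain
\begin{equation*}
X_n-\bar{X}_n=\triangle t\sum_{k=0}^{n-1}\left(A(X_k)-\bar{f}_1(X_k)\right)+\triangle t\sum_{k=0}^{n-1}\left(\bar{f}_1(X_k)-\bar{f}_1(\bar{X}_k)\right).
\end{equation*}
Taking $\mathbb{E}|\cdot|^p$, using $|a+b|^p\leq C_p(|a|^p+|b|^p)$ together with the discrete H\"older inequality $\left|\sum_{k=0}^{n-1}a_k\right|^p\leq n^{p-1}\sum_{k=0}^{n-1}|a_k|^p$, both sums pick up the prefactor $n^{p-1}(\triangle t)^p$. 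For the first sum I would invoke Lemma \ref{er}, whose bound is uniform in $k$, so the $n$ summands combine with $n^{p-1}(\triangle t)^p$ to give $n^p(\triangle t)^p\leq T^p$; this is precisely the asserted right-hand side up to a constant. For the second sum I would apply the Lipschitz property of $\bar{f}_1$ from Lemma \ref{lipc}, yielding the bound $C\triangle t\sum_{k=0}^{n-1}\mathbb{E}|X_k-\bar{X}_k|^p$ after using $n^{p-1}(\triangle t)^p\leq T^{p-1}\triangle t$.

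Writing $u_n=\mathbb{E}|X_n-\bar{X}_n|^p$, the two estimates combine into
\begin{equation*}
u_n\leq c+C\triangle t\sum_{k=0}^{n-1}u_k,\qquad c=C\left((\delta t)^{\frac{p}{\alpha}}+\left(\tfrac{\ln(M\delta t)+\beta}{M\beta\delta t}\right)^{\frac{p}{2}}+\left(\tfrac{1}{M}\right)^{\frac{p}{2}}\right),
\end{equation*}
and the discrete Gronwall inequality with $\omega_l\equiv C\triangle t$ gives $u_n\leq c\,e^{C\triangle t\,n}\leq c\,e^{CT}$, which is the claim. The part that needs the most care is the bookkeeping of the powers of $\triangle t$ and $n$ in the two sums: one must check that after applying discrete H\"older the consistency sum contributes an $O(1)$ constant (so that $c$ carries only the $\delta t$- and $M$-dependence), while the Lipschitz sum contributes a Gronwall coefficient of order $\triangle t$ (so that $\sum_{l}\omega_l\leq CT$ remains bounded as $\triangle t\to0$). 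Everything analytically substantial — the ergodic averaging and the microsolver error — has already been absorbed into Lemma \ref{er}, so no further estimation of the $\alpha$-stable noise is required here.
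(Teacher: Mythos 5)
Your proposal is correct and follows essentially the same route as the paper's own proof: both exploit the cancellation of the common noise increments, split the drift difference into the consistency error $A(X_i)-\bar f_1(X_i)$ (bounded by Lemma \ref{er}) and the Lipschitz term $\bar f_1(X_i)-\bar f_1(\bar X_i)$ (bounded via Lemma \ref{lipc}), and close with the discrete Gronwall inequality. If anything, your power-counting is more careful than the paper's, which silently passes from a coefficient $(\triangle t)^p$ to $\triangle t$ in the Gronwall sum, a step you justify explicitly via $n^{p-1}(\triangle t)^p\leq T^{p-1}\triangle t$.
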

\begin{proof}
Set $E_n=\mathbb{E}|X_n-\bar{X}_n|^{p}$, by Lemma \ref{er} and Lemma \ref{lipc}, we have
\begin{equation}
\begin{aligned}
E_{n}&=\mathbb{E}\left|\sum_{i=0}^{n-1}\left[A(X_i)-\bar{f}_{1}(\bar{X}_{i})\right]\triangle t\right|^{p}\\
&\leq C_{p}\mathbb{E}\left|\sum_{i=0}^{n-1}\left[A(X_i)-\bar{f}_{1}({X}_{i})\right]\triangle t\right|^{p}+ C_{p}\mathbb{E}\left|\sum_{i=0}^{n-1}\left[\bar{f}_1(X_i)-\bar{f}_{1}(\bar{X}_{i})\right]\triangle t\right|^{p}\\
& \leq C_{p, T}\max_{i<n}\mathbb{E}\left|A(X_i)-\bar{f}_{1}({X}_{i})\right|^{p}+C_{p, \widetilde{L}}\sum_{i=0}^{n-1}E_{i}\left(\triangle t\right)^{p}\\
& \leq  C(\delta t)^{\frac{p}{\alpha}}+C\left(\frac{\ln(M\delta t)+\beta }{M\beta\delta t}\right)^{\frac{p}{2}}+C\left(\frac{1}{M}\right)^{\frac{p}{2}}+C_{p, \widetilde{L}}\sum_{i=0}^{n-1}E_{i}\left(\triangle t\right).
\end{aligned}
\end{equation}
By a discrete version of Gronwall inequality, we have
\begin{equation}
E_{n} \leq C\left((\delta t)^{\frac{p}{\alpha}}+\left(\frac{\ln(M\delta t)+\beta }{M\beta\delta t}\right)^{\frac{p}{2}}+\left(\frac{1}{M}\right)^{\frac{p}{2}}\right).
\end{equation}
Therefore we have
\begin{equation}
\sup_{0\leq n \leq \lfloor T/\triangle t\rfloor}\mathbb{E}\left|X_n-\bar{X}_n\right|^{p} \leq C\left(\left(\delta t\right)^{\frac{p}{\alpha}}+\left(\frac{\ln(M\delta t)+\beta }{M\beta\delta t}\right)^{\frac{p}{2}}+\left(\frac{1}{M}\right)^{\frac{p}{2}}\right).
\end{equation}
\end{proof}
\section{Weak convergence analysis of the projective integration method}
In this section, we will give the  weak convergence analysis of the projective integration method, and the $p$th moment error bounds between the results of the projective integration method and the slow components of the original system  with $p \in \left(1,  \alpha\right)$.

 The following rate of weak convergence for the two-time scale stochastic dynamical systems driven by $\alpha$-stable processes was proved in  \cite[Theorem 2.3]{XS1}.

\begin{lemma}[Weak convergence]
\label{weak}
Suppose that the assumptions in Lemma \ref{sce}  holds. Further assume that $f_1, f_2 \in C^{2+\gamma, 2+\gamma}_{b}$ with $\gamma \in \left(\alpha-1, 1\right)$. Then for any $\phi \in C^{2+\gamma}_{b}(\mathbb{R}^{m})$ and initial value $(x, y) \in \mathbb{R}^{n}\times \mathbb{R}^{m}$, we have
\begin{equation}
\sup_{t \in [0, T]}\left|\mathbb{E}\phi(X^{\varepsilon}_t)-\mathbb{E}\phi(\bar{X}_t)\right|\leq C \varepsilon.
\end{equation}
where $C$ is a positive constant depending on $T$, $||\phi||_{C^{2+\gamma}_{b}}$, $|x|$ and $|y|$, and $\bar{X}_t$ is the solution of the averaged equation \eqref{effective}.
\end{lemma}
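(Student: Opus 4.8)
The plan is to establish the $O(\varepsilon)$ weak rate by the backward-Kolmogorov/Poisson-equation technique, which first rewrites the weak error as a time-averaged fluctuation of $f_1$ about its ergodic mean $\bar f_1$ and then extracts one power of $\varepsilon$ from that fluctuation. Write the generator of the full system \eqref{sys01} as $\mathcal{L}^{\varepsilon}=\mathcal{L}_1+\tfrac{1}{\varepsilon}\mathcal{L}_2$, where $\mathcal{L}_1=\langle f_1(x,y),\nabla_x\rangle+\sigma_1^{\alpha}\mathcal{I}_x$ is the slow generator ($\mathcal{I}_x$ the $\alpha$-stable nonlocal part in $x$) and $\mathcal{L}_2(x)=\langle f_2(x,\cdot),\nabla_y\rangle+\sigma_2^{\alpha}\mathcal{I}_y$ is the generator of the frozen fast process; the $\varepsilon^{1/\alpha}$ scaling of the fast noise is precisely what allows $\tfrac1\varepsilon$ to factor out cleanly in front of $\mathcal{L}_2$. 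Let $\bar u(s,x)=\mathbb{E}[\phi(\bar X_T)\mid \bar X_s=x]$ solve the backward equation $\partial_s\bar u+\bar{\mathcal{L}}\bar u=0$ with $\bar u(T,\cdot)=\phi$, where $\bar{\mathcal{L}}=\langle\bar f_1(x),\nabla_x\rangle+\sigma_1^{\alpha}\mathcal{I}_x$ is the averaged generator of \eqref{effective}. Applying It\^o's formula to $\bar u(s,X^{\varepsilon}_s)$ on $[0,T]$ and using $\partial_s\bar u=-\bar{\mathcal{L}}\bar u$, the two identical $\sigma_1$-nonlocal terms cancel, leaving
\begin{equation*}
\mathbb{E}\phi(X^{\varepsilon}_T)-\mathbb{E}\phi(\bar X_T)=\mathbb{E}\int_0^T\big\langle f_1(X^{\varepsilon}_s,Y^{\varepsilon}_s)-\bar f_1(X^{\varepsilon}_s),\,\nabla_x\bar u(s,X^{\varepsilon}_s)\big\rangle\,ds .
\end{equation*}
Since the horizon is arbitrary, a bound of this quantity by $C\varepsilon$ for each terminal time controls the supremum over $t\in[0,T]$.

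The integrand $g(s,x,y):=\langle f_1(x,y)-\bar f_1(x),\nabla_x\bar u(s,x)\rangle$ is centered against the invariant measure, $\int_{\mathbb{R}^m}g(s,x,y)\,\mu_x(dy)=0$, because $\bar f_1(x)=\int f_1(x,y)\mu_x(dy)$. Hence for each frozen $(s,x)$ I can solve the Poisson equation $\mathcal{L}_2(x)\chi(s,x,\cdot)=-g(s,x,\cdot)$ through the ergodic representation $\chi(s,x,y)=\int_0^{\infty}\mathbb{E}\,g(s,x,z^{x,y}_r)\,dr$, where $z^{x,y}_r$ is the fast process with the slow variable frozen at $x$ and $z^{x,y}_0=y$ (cf. \eqref{znn}); the integral converges and is bounded thanks to the exponential ergodicity of Lemma \ref{avv} and the moment bound of Lemma \ref{z}. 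Applying It\^o's formula to $\chi(s,X^{\varepsilon}_s,Y^{\varepsilon}_s)$ with the full generator and using $\tfrac1\varepsilon\mathcal{L}_2\chi=-\tfrac1\varepsilon g$ gives
\begin{equation*}
\mathbb{E}\int_0^T g\,ds=\varepsilon\Big(\chi(0,x,y)-\mathbb{E}\chi(T,X^{\varepsilon}_T,Y^{\varepsilon}_T)+\mathbb{E}\int_0^T\big(\partial_s\chi+\mathcal{L}_1\chi\big)\,ds\Big),
\end{equation*}
so the weak error is $O(\varepsilon)$ as soon as $\chi,\partial_s\chi$ and $\mathcal{L}_1\chi$ are bounded, the at-most-linear growth of $f_1$ and of the $y$-jump contributions in $\mathcal{L}_1\chi$ being absorbed by the a priori moment estimates for $(X^{\varepsilon},Y^{\varepsilon})$.

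Two regularity inputs make the scheme work. First, since $\bar f_1$ is Lipschitz (Lemma \ref{lipc}) and, under the strengthened hypothesis $f_1,f_2\in C^{2+\gamma,2+\gamma}_b$, is in fact $C^{2+\gamma}_b$, Schauder estimates for the nonlocal backward equation with terminal data $\phi\in C^{2+\gamma}_b$ give $\bar u(s,\cdot)\in C^{2+\gamma}_b$ uniformly in $s$; here the restriction $\gamma\in(\alpha-1,1)$ ensures $1+\gamma>\alpha$, placing the problem in the subcritical regime where the $\sigma_1$-stable operator acts boundedly on $\bar u$ and $\nabla_x\bar u$ is bounded and Lipschitz. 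Second, and this is the main obstacle, I must control the smoothness of $\chi(s,x,y)$ in the slow variable $x$ uniformly in $(s,y)$, since $\mathcal{L}_1\chi$ involves $\nabla_x\chi$ together with the nonlocal second-order action in $x$. Differentiating the representation $\chi=\int_0^{\infty}\mathbb{E}\,g(s,x,z^{x,y}_r)\,dr$ in the parameter $x$ requires differentiating both the frozen flow $z^{x,y}_r$ and the invariant measure $\mu_x$ in $x$, and then verifying that the exponential decay survives differentiation. This is exactly where the $C^{2+\gamma}$ smoothness of $f_1,f_2$ in both arguments is used, and where the uniform-in-$x$ dissipativity constant $\beta$ from Hypothesis $\mathbf{H.3}$ is indispensable: it furnishes a uniform spectral gap for $\mathcal{L}_2(x)$ that propagates to the derivatives $\nabla_x z^{x,y}_r$ and hence yields the bounds on $\nabla_x\chi$ and $\nabla_x^2\chi$ needed to close the estimate. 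Combining these regularity bounds with the moment estimates then gives $\big|\mathbb{E}\phi(X^{\varepsilon}_t)-\mathbb{E}\phi(\bar X_t)\big|\le C\varepsilon$ uniformly on $[0,T]$.
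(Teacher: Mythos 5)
The paper never actually proves Lemma \ref{weak}: it is imported verbatim as \cite[Theorem 2.3]{XS1}, so there is no internal proof to compare against, only the argument in that reference. Your outline --- backward Kolmogorov equation for the averaged generator to reduce the weak error to a time-integrated fluctuation $\langle f_1-\bar f_1,\nabla_x\bar u\rangle$, then a Poisson equation $\mathcal{L}_2(x)\chi=-g$ in the fast variable whose It\^o expansion extracts exactly one factor of $\varepsilon$ --- is precisely the strategy of that reference, and your sketch is correct at the level of detail given; you also correctly identify (rather than gloss over) the genuine technical crux, namely the boundedness and $x$-regularity of $\chi$ uniformly in $(s,y)$, which is where the strengthened hypothesis $f_1,f_2\in C_b^{2+\gamma,2+\gamma}$, the constraint $\gamma\in(\alpha-1,1)$, and the uniform dissipativity constant $\beta$ of Hypothesis $\mathbf{H.3}$ are consumed. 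A fully rigorous write-up would still have to carry out those derivative-and-decay estimates for the frozen flow and the invariant measure $\mu_x$, but as a proof plan it matches the cited source.
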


Next we will give the rate of weak convergence for the PIM.
\begin{theorem}[Weak convergence for the PIM  ]
Let $X_n$ be the Euler approximation for $X_t$ and $\bar{X}_{n}$  be the Euler approximation for $\bar{X}_t$, then for any $\phi \in C^{2+\gamma}_{b}(\mathbb{R}^{m})$, we have
\begin{equation}
\sup_{0\leq n \leq \lfloor T/\triangle t\rfloor}\left|\mathbb{E}\phi(X_n)-\mathbb{E}\phi(\bar{X}_n)\right|\leq  C\left((\delta t)^{\frac{p}{\alpha}}+\left(\frac{\ln(M\delta t)+\beta }{M\beta\delta t}\right)^{\frac{p}{2}}+\left(\frac{1}{M}\right)^{\frac{p}{2}}\right)^{\frac{1}{p}}\triangle t.
\end{equation}
\end{theorem}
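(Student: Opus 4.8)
The plan is to compare the two Euler recursions \eqref{macro} and the macro-solver \eqref{org} directly, exploiting that they are driven by the \emph{same} $\alpha$-stable increments $\triangle L^{\alpha}_n$ and differ only through their drifts $A(\cdot)$ and $\bar{f}_1(\cdot)$. Since $\phi\in C^{2+\gamma}_{b}\subset C^{1}_{b}$, the crudest bound is $|\mathbb{E}\phi(X_n)-\mathbb{E}\phi(\bar{X}_n)|\le \|\phi\|_{C^{1}_{b}}\,\mathbb{E}|X_n-\bar{X}_n|\le \|\phi\|_{C^{1}_{b}}\,(\mathbb{E}|X_n-\bar{X}_n|^{p})^{1/p}$ by Jensen's inequality, which already produces the factor $(\cdots)^{1/p}$ once the strong-convergence theorem is inserted. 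To recover the extra $\triangle t$, i.e.\ the genuine weak gain over the strong rate, I would instead run a one-step telescoping weak-error argument.

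Introduce the discrete backward functions of the effective scheme, $u_k:=\bar{P}^{k}\phi$, where $\bar{P}g(x)=\mathbb{E}[g(x+\bar{f}_1(x)\triangle t+\sigma_1\triangle L^{\alpha})]$, so that $u_0=\phi$ and $u_{n-i}=\bar{P}u_{n-i-1}$. First I would establish a uniform bound $\sup_{0\le k\le \lfloor T/\triangle t\rfloor}\|u_k\|_{C^{1}_{b}}\le C$, using $u_0\in C^{2+\gamma}_{b}$ and the Lipschitz property of $\bar{f}_1$ (Lemma \ref{lipc}), so that for $\triangle t$ small the map $\bar{P}$ is stable on Lipschitz functions. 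With $X_0=\bar{X}_0=x_0$ one then telescopes $\mathbb{E}\phi(X_n)-\mathbb{E}\phi(\bar{X}_n)=\sum_{i=0}^{n-1}\big(\mathbb{E}[u_{n-i-1}(X_{i+1})]-\mathbb{E}[u_{n-i}(X_i)]\big)$, and, coupling the two one-step moves through the common increment $\triangle L^{\alpha}_i$, the $i$-th summand equals $\mathbb{E}[u_{n-i-1}(X_i+A(X_i)\triangle t+\sigma_1\triangle L^{\alpha}_i)-u_{n-i-1}(X_i+\bar{f}_1(X_i)\triangle t+\sigma_1\triangle L^{\alpha}_i)]$.

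A first-order Taylor expansion of $u_{n-i-1}$ at $X_i+\bar{f}_1(X_i)\triangle t+\sigma_1\triangle L^{\alpha}_i$ produces the leading term $\triangle t\,\mathbb{E}[\nabla u_{n-i-1}(\cdot)\cdot(A(X_i)-\bar{f}_1(X_i))]$; using that the micro-solver randomness defining $A(X_i)$ is independent of the macro-increment $\triangle L^{\alpha}_i$ and that the symmetric $\alpha$-stable increments are mean-zero, this contribution carries an additional power of $\triangle t$, so that each summand is of order $(\triangle t)^{2}\,(\mathbb{E}|A(X_i)-\bar{f}_1(X_i)|^{p})^{1/p}$ after passing from $L^{1}$ to $L^{p}$ by Jensen. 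Inserting the drift estimate of Lemma \ref{er} and summing the $n\le \lfloor T/\triangle t\rfloor$ terms, the $(\triangle t)^{2}$ combines with $n\le T/\triangle t$ to leave exactly one surviving $\triangle t$ together with the stated power $(\cdot)^{1/p}$, after absorbing $T$, $\|\phi\|_{C^{1}_{b}}$ and the Lipschitz constants into $C$. The main obstacle is two-fold: establishing the uniform-in-$k$ $C^{1}$ regularity of the discrete semigroup $u_k=\bar{P}^{k}\phi$ for a scheme driven by heavy-tailed $\alpha$-stable noise, where the generator is non-local and only moments of order $p<\alpha$ are finite, so that Gaussian-type gradient estimates are unavailable and must be replaced by arguments compatible with the $\alpha$-stable integrability; and rigorously extracting the extra factor of $\triangle t$ from the one-step expansion, which is precisely where the mean-zero symmetry of the increments and the micro/macro independence enter, with the moment bounds of Lemmas \ref{y}--\ref{xn} controlling the random PIM drift $A(X_i)$ uniformly along the trajectory.
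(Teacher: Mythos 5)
Your overall architecture --- the backward functions $u_k=\bar P^{k}\phi$ built from the one-step effective Euler operator, the uniform $C^{1}_{b}$ bound on them, the telescoping sum, and the coupling of the two one-step maps through the common increment $\triangle L^{\alpha}_i$ --- is exactly the paper's proof: its auxiliary function $u(k,x_0)$ is precisely your $\bar P^{\,n-k}\phi$, and it likewise reduces the $l$-th summand to the difference of the two one-step moves started from $X_l$. The paper then simply applies the Lipschitz bound $|u(x)-u(y)|\le \sup\left|\frac{\partial u}{\partial x}\right||x-y|$, which makes each summand at most $C\triangle t\,\mathbb{E}|A(X_l)-\bar f_1(X_l)|$, and invokes Lemma \ref{er}. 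The genuine gap in your proposal is the claimed extra power of $\triangle t$: you assert that the first-order term $\triangle t\,\mathbb{E}\left[\nabla u_{n-i-1}(\cdot)\cdot\left(A(X_i)-\bar f_1(X_i)\right)\right]$ gains an additional factor $\triangle t$ from the mean-zero symmetry of the macro increments and the micro/macro independence. That step would fail.

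Conditioning on $X_i$ and the micro-solver noise (which together determine $A(X_i)$), independence of $\triangle L^{\alpha}_i$ only lets you replace $\nabla u_{n-i-1}\bigl(X_i+\bar f_1(X_i)\triangle t+\sigma_1\triangle L^{\alpha}_i\bigr)$ by its average $g(X_i):=\mathbb{E}\bigl[\nabla u_{n-i-1}\bigl(X_i+\bar f_1(X_i)\triangle t+\sigma_1\triangle L^{\alpha}\bigr)\bigr]$, which is bounded but not small; the summand is then $\triangle t\,\mathbb{E}\bigl[g(X_i)\cdot\bigl(A(X_i)-\bar f_1(X_i)\bigr)\bigr]$. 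For this to be $O\bigl((\triangle t)^2\bigr)$ you would need the conditional bias $\mathbb{E}\bigl[A(X_i)-\bar f_1(X_i)\mid X_i\bigr]$ to be an order of $\triangle t$ smaller than the $L^p$ error, but Lemma \ref{er} controls only the magnitude, and the error of the empirical average $A$ contains systematic, non-mean-zero contributions --- the microsolver discretization bias of order $(\delta t)^{1/\alpha}$, the initialization/relaxation error, and the finite-$M$ mixing error --- none of which is proportional to $\triangle L^{\alpha}_i$, so the symmetry of that increment is irrelevant. Hence each summand is genuinely of order $\triangle t\,\mathbb{E}|A(X_i)-\bar f_1(X_i)|$, and summing $n\le T/\triangle t$ of them yields $C\,T\,(\cdots)^{1/p}$ with no surviving factor $\triangle t$. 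To be fair, the paper's own proof has the same defect: its penultimate display $C\sum_{l=0}^{n-1}\triangle t\,\mathbb{E}|A(X_l)-\bar f_1(X_l)|$ is of size $n\triangle t\,(\cdots)^{1/p}\le T(\cdots)^{1/p}$, and the trailing $\triangle t$ in the stated theorem appears only by silently dropping the factor $n$. So what your argument, carried out honestly (telescoping, uniform gradient bound, Lemma \ref{er}, Jensen), actually proves is the bound \emph{without} the final $\triangle t$ --- which coincides with what the paper's proof really establishes --- while your proposed mechanism for recovering that factor does not work.
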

\begin{proof}
For any $n \leq \lfloor T/\triangle t\rfloor$, we construct the following  auxiliary function $u(k,x_0)$, i.e.,
\begin{equation}
u(k,x_0)=
\begin{cases}
&\phi(x_0), ~~k=n, \\
&\mathbb{E}\left[u(k+1, x_0+\bar{f}_1(x_0)\Delta t+\sigma_1\Delta L^{\alpha}_{\cdot})\right], ~~k<n,
\end{cases}
\end{equation}
then we have
\begin{equation}
u(0,x_0)=\mathbb{E}\phi(\bar{X}_{n}).
\end{equation}
By the smoothness of $\phi$, it is easy to show that  $\sup_{k,x}\left|\frac{\partial u(k,x)}{\partial x}\right|$ is uniformly bounded. Therefore we have
\begin{equation}
\begin{aligned}
&\left|\mathbb{E}\phi(X_n)-\mathbb{E}\phi(\bar{X}_n)\right|\\
&=\left|\mathbb{E}u(n, X_{n})-u(0,x_0)\right|\\
&=\left|\mathbb{E}\left(\sum_{l=0}^{n-1}u(l+1, X_{l+1})-u(l,X_{l})\right)\right|\\
&=\left|\sum_{l=0}^{n-1}\mathbb{E}\left(u(l+1, X_{l+1})-u(l+1, X_l)-\left(u\left(l+1,\bar{X}^{l, X_{l}}_{l+1}\right)-u\left(l+1,X_l\right)\right)\right)\right|\\
&\leq \sum_{l=0}^{n-1}\mathbb{E}\left\{\sup\left|\frac{\partial u}{\partial x}\right|\left|\left(X_{l+1}-X_{l}-\left(\bar{X}^{l, X_{l}}_{l+1}-X_{l}\right)\right)\right|\right\}\\
& \leq C\sum_{l=0}^{n-1} \triangle t \mathbb{E}|A(X_l)-\bar{f}_1(X_l)|.
\end{aligned}
\end{equation}
By Lemma \ref{er}, we have
\begin{equation}
\sup_{0\leq n \leq \lfloor T/\triangle t\rfloor}|\mathbb{E}\phi(X_n)-\mathbb{E}\phi(\bar{X}_n)|\leq C\left((\delta t)^{\frac{p}{\alpha}}+\left(\frac{\ln(M\delta t)+\beta }{M\beta\delta t}\right)^{\frac{p}{2}}+\left(\frac{1}{M}\right)^{\frac{p}{2}}\right)^{\frac{1}{p}}\triangle t.
\end{equation}
\end{proof}
\section{Numerical Example}
\begin{example}
Consider the following slow-fast stochastic dynamical systems
\begin{equation}
\label{ex}
\left\{
\begin{array}{rl}
dX^{\varepsilon}_t&=-X^{\varepsilon}_tdt+\sin(X^{\varepsilon}_t)e^{-\left(Y^{\varepsilon}_t\right)^2}dt+dL_t^{{\alpha }},\\
dY^{\varepsilon}_t&=-\frac{Y^{\varepsilon}_t}{\e}dt+ \frac{1}{\e^{\frac{1}{\alpha }}}dL_t^{{\alpha }}.
\end{array}
\right.
\end{equation}
where $f_1(x,y)=-x+\sin x e^{-y^2}$,  $f_2(x,y)=-y$ and $\sigma_1=\sigma_2=1$. It is easy to justify that $f_1, f_2$ satisfy Hypotheses $\bf{H.1}$-$\bf{H.3}$.
Using a result in \cite{ar}, we find the   invariant measure   $\mu(dx)=\rho(x)dx$ with density
\begin{equation}
\rho(x)=\frac{1}{{2\pi}}\int_{\mathbb{R}}e^{ix\xi}e^{-\frac{1}{\alpha}|\xi |^{\alpha}}d\xi.
\end{equation}
Then the effective equation for $X^{\varepsilon}_t$ is
\begin{equation}
d\bar{X}(t)=-\bar{X}(t)dt+ \bar{a} \sin(\bar{X}(t))dt+dL_t^{{\alpha }},
\end{equation}
 where
\begin{equation*}
\bar{a}=\frac{\alpha^{\frac{1}{\alpha}}}{2\sqrt{2}}\int_{\mathbb{R}}e^{-\frac{1}{4}\xi^2-\frac{1}{\alpha}| \xi|^{\alpha}}d\xi.\\
\end{equation*}
\end{example}

The numerical study is performed for the above model. In Fig.~\ref{eps},  we take $x_0=10, y_0=10$, $\alpha=1.5$, $ M=100, N=1000$ with different $\varepsilon$, The macro and micro time steps are $\Delta t=0.001$, $\delta t= \Delta t/M$, respectively. The results, presented  in Fig.~\ref{eps}, indicate that the solution of slow component produced by the PIM approximates the solution of effective equation. It is worth emphasizing that even if the invariant measures cannot be obtained explicitly, we can still detect slow variables by PIM method. In Fig.~\ref{error}, we compute the $L^p$ error between $X_n$ and $\bar{X}_n$ with $p=1.2$ for different $\varepsilon$, where $L^p$  error $= \sum_{k=1}^l|X_n^k- \bar{X}_n^k|^p/l$. As seen in Fig.~\ref{error} , the $L^p$  error is larger as the $\varepsilon$ becomes larger.

\begin{figure}
\centering
\subfigure[$\varepsilon=0.1$]{
\begin{minipage}[t]{0.5\linewidth}
\centering
\includegraphics[height=5cm]{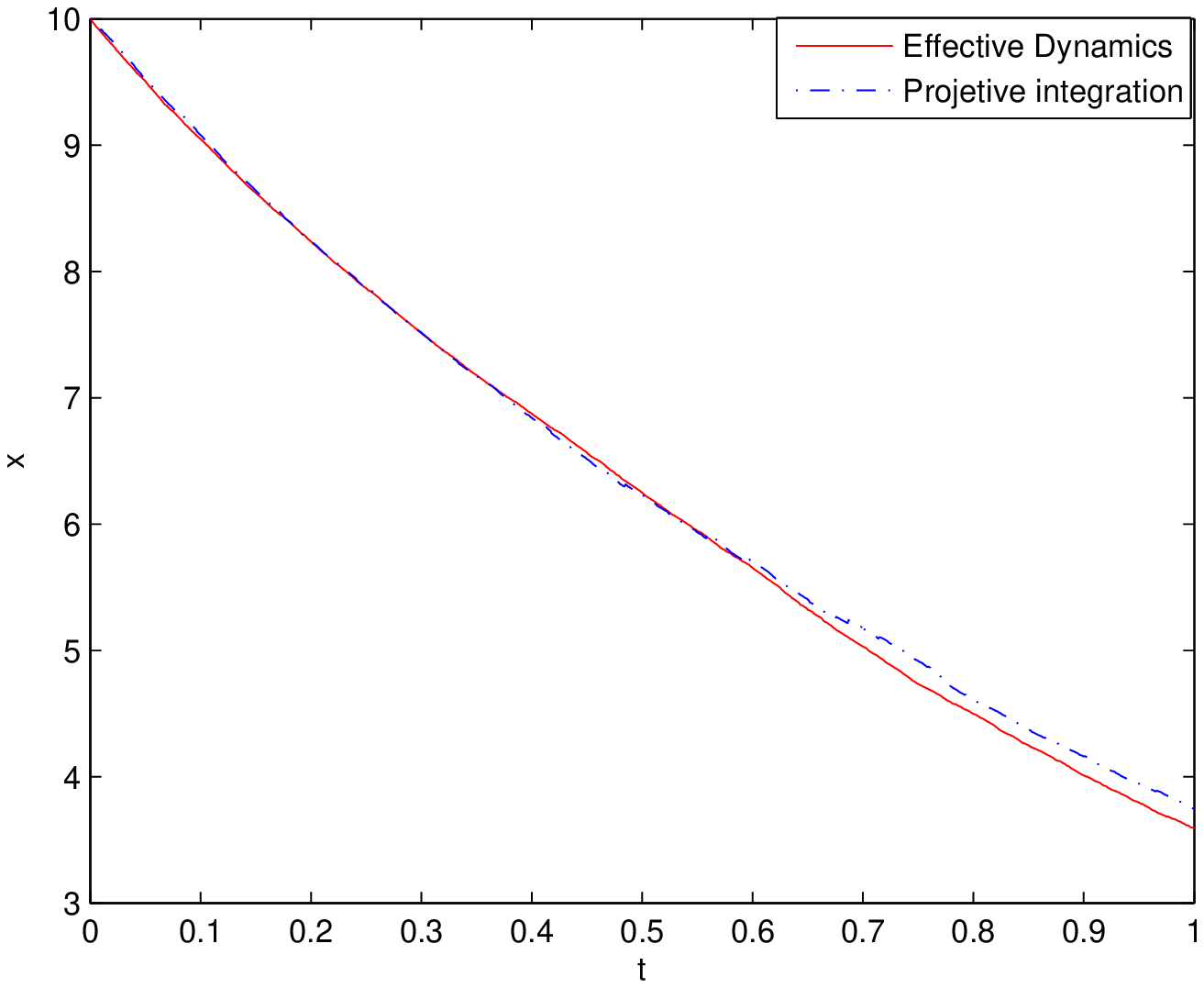}
\end{minipage}%
}%
\subfigure[$\varepsilon=0.01$]{
\begin{minipage}[t]{0.5\linewidth}
\centering
\includegraphics[height=5cm]{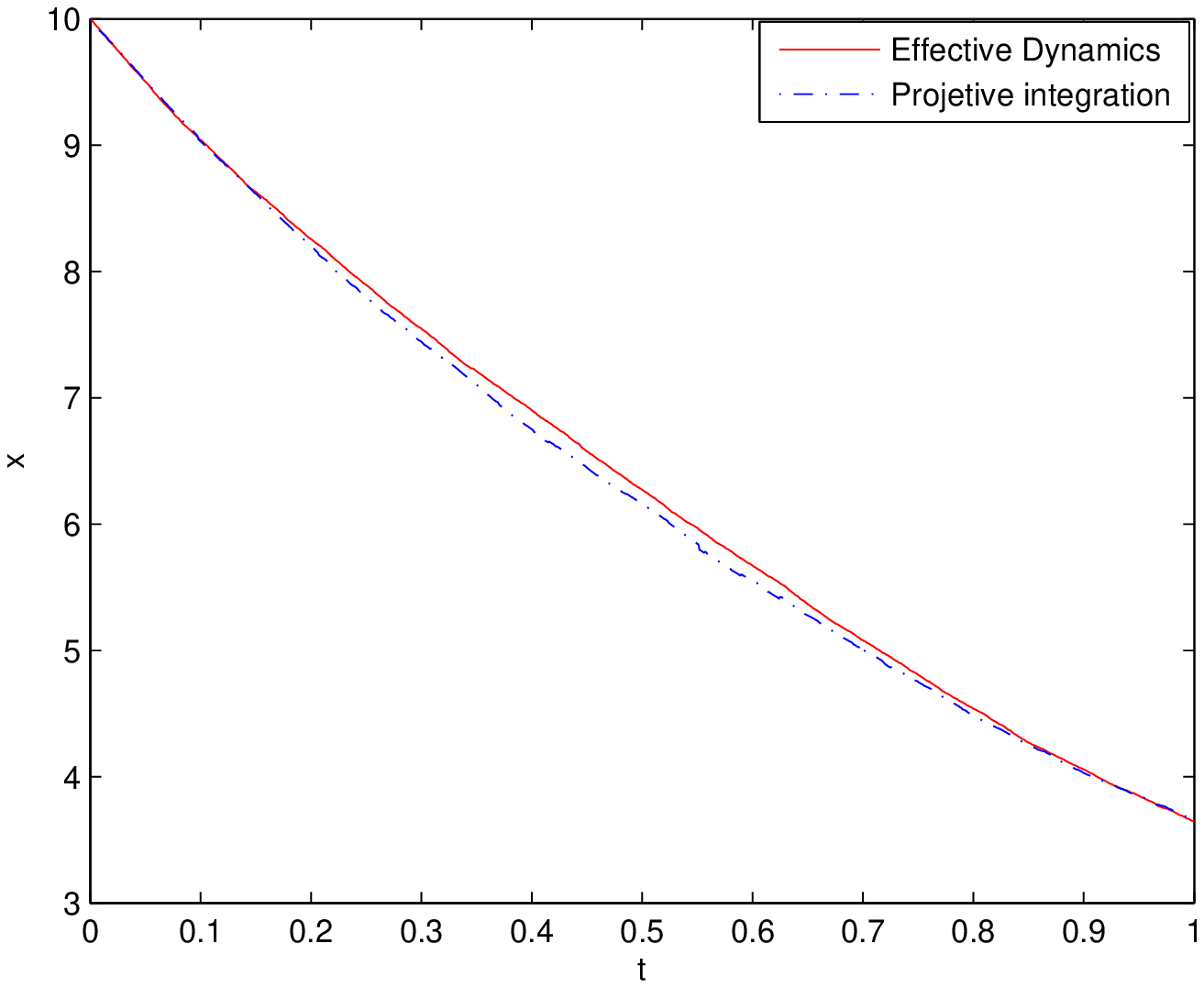}
\end{minipage}%
}%
\centering
\caption{ Compare the effective dynamics with the stochastic projective integration schemes.}
\label{eps}
\end{figure}

\begin{figure}
\centering
\subfigure[$\varepsilon=0.1$]{
\begin{minipage}[t]{0.5\linewidth}
\centering
\includegraphics[height=5cm]{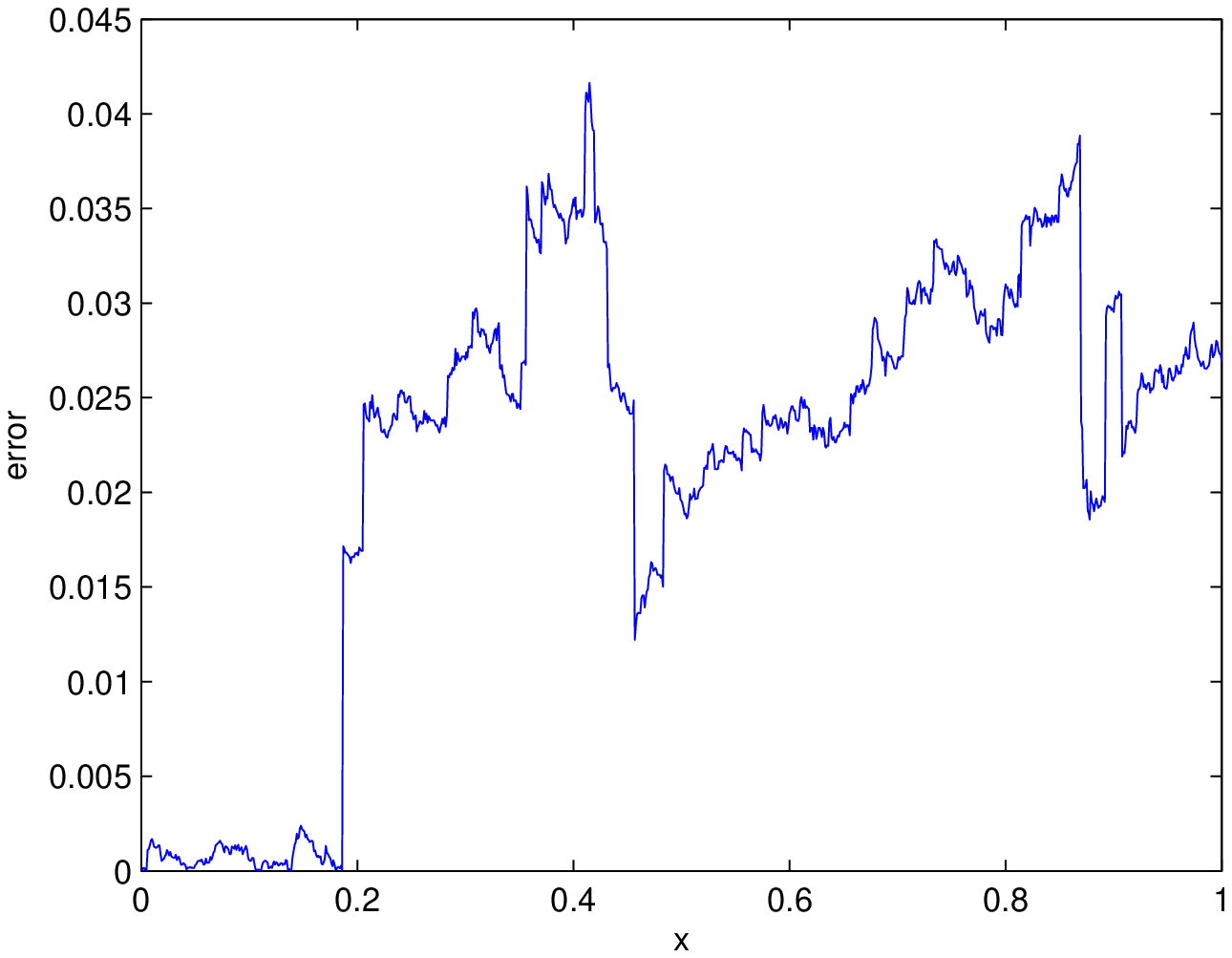}
\end{minipage}%
}%
\subfigure[$\varepsilon=0.01$]{
\begin{minipage}[t]{0.5\linewidth}
\centering
\includegraphics[height=5cm]{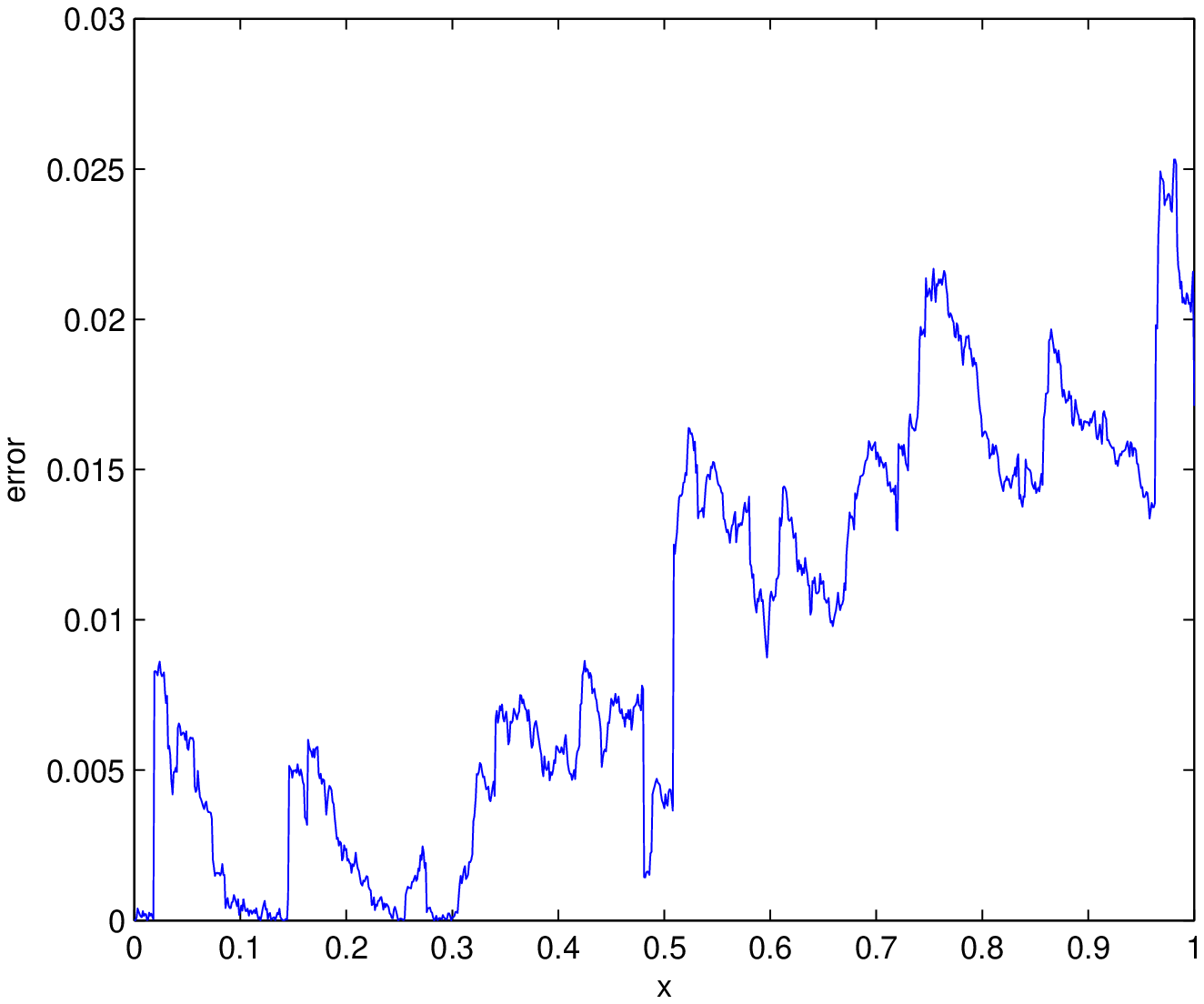}
\end{minipage}%
}%
\centering
\caption{ The $L^{p}$ error between $X_n$ and $\bar{X}_n$ for $\varepsilon=0.1$.}
\label{error}
\end{figure}

\begin{table}[tbp]
\caption{the error $E_p$ for different $l$. }
\begin{center}
{\setlength{\tabcolsep}{8mm}
 \begin{tabular}{cccccc}
 \hline

 \hline

 \hline
 l    &  1   & 2 & 3    & 4  & 5\\

\hline

 p=1.4 & 0.3324  &  0.1711   & 0.0759 &  0.0444  &    0.0270    \\
\hline

\hline

\end{tabular}
}
\end{center}
\label{error11}
 \end{table}

 \begin{figure}
\begin{center}
\includegraphics*[width=0.6\linewidth]{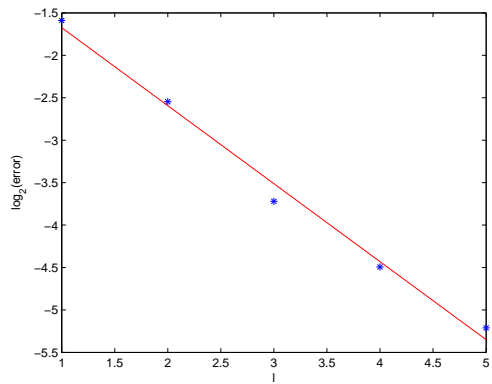}
\end{center}
\caption{The logarithm error $\log_2(E_p)$ with $p=1.4$, $\varepsilon=0.1$ . The
solid line is a line with $slope=0.92$. }
\label{orderError}
\end{figure}

Suppose that $T=1$ is fixed,  we
want to bound the error by $\mathbb{O}(2^{-l})$ for $l=1,2,...$, then the
optimal choice is to take $\delta t=\mathcal {O}(2^{-\frac{l}{p}\alpha_2})$  and
$M=\mathcal {O}(2^{(2+\alpha_2)\frac{l}{p}})$ by simple analysis. Here we compute the following error
estimate averaged with the $K=1000$ sample paths of the macro dynamics
$$
  E_p=\frac{1}{K}\sum_{k=1}^K\left(\sum_{n\leq \frac{T}{\Delta}} \frac{\Delta}{T}|X_n^k- \bar{X}_n^k|^p\right).
$$
The errors $E_p$ for different $l$ are listed in Table \ref{error11}, and the logarithm errors $\log_2(E_p)$  are also
shown in Fig.~\ref{orderError}. We observe that the errors are agree with our theoretical result $\mathbb{O}(2^{-l})$ .

\section{Discussion}
In this paper, we studied the strong and weak convergences of projective integration methods for the  multiscale stochastic dynamical systems driven by $\alpha$-stable processes, which were used to estimate the effect that the fast components have on slow ones. Moreover, we obtained the $p$th moment error bounds between the solution of slow component produced by the PIM and the solution of effective dynamical system with $p \in \left(1,  \alpha\right)$. Our scheme provided a new numerical method to obtain the slow component even if  the invariant measure was unknown.

By reexamining  the argument of the above results, the parameter value $\alpha$ of noise terms in slow component and fast component can be different, for example
\begin{equation}
\label{sys01}
\left\{
\begin{aligned}
dX^{\varepsilon}_t&=f_1(X^{\varepsilon}_t,Y^{\varepsilon}_t)dt +\sigma_1dL^{\alpha_1}_t, ~~X^{\varepsilon}_{0}=x_0 \in \mathbb{R}^n,\\
dY^{\varepsilon}_t&=\frac{1}{\varepsilon}f_2(X^{\varepsilon}_t,Y^{\varepsilon}_t)dt+\frac{\sigma_2}{\varepsilon^{\frac{1}{\alpha}}}dL^{\alpha_2}_t,
~~Y^{\varepsilon}_{0}=y_0 \in \mathbb{R}^m,
\end{aligned}
\right.
\end{equation}
where $L^{\alpha_1}_t, L^{\alpha_2}_t$ are
independent $n$ and $m$ dimensional symmetric  $\alpha$-stable processes with triplets $(0, 0, \nu_1)$ and $(0, 0, \nu_2)$, respectively.

There are some limitations for this paper. The condition $p \in (1, \alpha)$ plays an important role in deriving the effective dynamical system. Therefore, for the technical reason, it seems hard to show Lemma  \ref{sce} and Lemma \ref{weak} with $p \in (0, 1)$. For such a case, it is necessary to find some new approaches to study. It is an open problem for the convergence in probability of averaging principle for the systems  \eqref{sys}.

As an extension of the model, we also will  study equations like \eqref{sys} on the diffusive time scale, i.e.,
\begin{equation}
\label{sys}
\left\{
\begin{aligned}
dX^{\varepsilon}_t&=f_1(X^{\varepsilon}_t,Y^{\varepsilon}_t)dt++ \sigma_1 dL_t^{{\alpha}_1}, ~~X^{\varepsilon}_{0}=x_0 \in \mathbb{R}^n,\\
dY^{\varepsilon}_t&=\frac{1}{\varepsilon^2}f_2(X^{\varepsilon}_t,Y^{\varepsilon}_t)dt+\frac{\sigma_2}{\varepsilon^{\frac{2}{\alpha}}}dL^{\alpha_2}_t,
~~Y^{\varepsilon}_{0}=y_0 \in \mathbb{R}^m,
\end{aligned}
\right.
\end{equation}
Some new theoretical and numerical results will be reported in the forthcoming manuscript.

\medskip
\textbf{Acknowledgements}.
 The research of Y. Zhang was supported by the NSFC grant 11901202.  The research
of X. Wang was supported by the NSFC grant 11901159. The research of J. Duan was supported  by the NSF-DMS no. 1620449 and NSFC grant. 11531006 and  11771449.
\section*{Reference}

\end{document}